\def\BibTeX{{\rm B\kern-.05em{\sc i\kern-.025em b}\kern-.08em T\kern-.1667em\lower.7ex\hbox{E}\kern-.125emX}}
\newtheorem{remark}{Remark}[section]
\newtheorem{proposition}{Proposition}[section]
\newtheorem{theorem}{Theorem}[section]
\newtheorem{lemma}{Lemma}[section]
\begin{document}

%\title{Using the Style File IEEEtran.sty}
\title{On Tightly Bounding the Dubins Traveling Salesman's Optimum} %!PN

\author{Satyanarayana Manyam\thanks{National Research Council Fellow, Air Force Research Laboratory, Dayton-Ohio, 45433.},~~
Sivakumar Rathinam\thanks{Associate Professor, Mechanical Engineering, Texas A \& M University, College Station, TX-77843, \textbf{corresponding author}:
srathinam@tamu.edu}
}

\markboth{}
%{Murray and Balemi: Using the style file IEEEtran.sty} %!PN
{Murray and Balemi: Using the Document Class IEEEtran.cls} %!PN

%\IEEEpeerreviewmaketitle

\maketitle

\begin{abstract}
The Dubins Traveling Salesman Problem (DTSP) has generated significant interest over the last decade due to its occurrence in several civil and military surveillance applications. Currently, there is no algorithm that can find an optimal solution to the problem. In addition, relaxing the motion constraints and solving the resulting Euclidean TSP (ETSP) provides the only lower bound available for the problem. However, in many problem instances, the lower bound computed by solving the ETSP is far below the cost of the feasible solutions obtained by some well-known algorithms for the DTSP. This article addresses this fundamental issue and presents the first systematic procedure for developing tight lower bounds for the DTSP.
\end{abstract}

\section{Introduction}
Given a set of targets on a plane and a constant $\rho \geq 0$, the Dubins Traveling Salesman Problem (DTSP) aims to find a
path such that each target is visited at least once, the radius of curvature of any point in the path is at least equal to $\rho$,
and the length of the path is minimal. This problem is a generalization of the Euclidean TSP (ETSP) and is NP-hard\cite{Rathinam_2007_IEEETASE,le2012dubins}. The DTSP belongs to a class of task allocation and path planning problems envisioned for a team of unmanned aerial vehicles in \cite{chandler98}. The DTSP has received significant attention in the literature \cite{Medeiros2010,Orient2014,macharet2013efficient,macharet2012data,sujit2013route,kenefic2008finding,macharet2011nonholonomic,Ozguner2005,ketan_2008,Rathinam_2007_IEEETASE,le2012dubins,shima,ma2006receding}, mainly due to its importance in unmanned vehicle applications, the simplicity of the problem statement, and its status as a hard problem to solve because it inherits features from both optimal control and combinatorial optimization.

Currently, there is no procedure for finding an optimal solution for the DTSP. Therefore, heuristics and approximation algorithms have been developed over the last decade to find feasible solutions. Tang and Ozguner \cite{Ozguner2005} present gradient-based heuristics for both single and multiple vehicle variants of the DTSP. Savla et al. \cite{ketan_2008} use an optimal solution to the ETSP to find a feasible solution for the DTSP, and they bound the cost of the feasible solution with respect to the optimal cost of the ETSP. Rathinam et al. \cite{Rathinam_2007_IEEETASE} develop an approximation algorithm for the DTSP in cases where the distance between any two targets is at least equal to $2\rho$. Ny et al. \cite{le2012dubins} develop an approximation algorithm for the DTSP in which the approximation guarantee is inversely proportional to the minimum distance between any two targets. The weakness of the approximation guarantees of these algorithms for the DTSP is due to the lack of a good lower bound, as all these algorithms essentially use the Euclidean distances between the targets to bound the cost of a feasible solution.

Other heuristics have been used for solving the DTSP. A receding horizon approach that involves finding an optimal Dubins path through three consecutive targets is used to generate feasible solutions in \cite{ma2006receding}. The heuristic in \cite{Medeiros2010} finds a feasible solution by minimizing the sum of the distances travelled by the vehicle and the sum of the changes in the heading angles at each of the targets. Macharet et al. \cite{Orient2014,macharet2012data} first obtain a tour by solving the ETSP and then select the heading angle at each target using an orientation-assignment heuristic. A multiple lookahead approach is used to find feasible solutions in \cite{sujit2013route,isaiah2015motion}. Meta-heuristics have also been developed to find feasible solutions for the DTSP in \cite{kenefic2008finding,macharet2011nonholonomic}.

\begin{figure}[htb]
\centering{}
\includegraphics[width=4in]{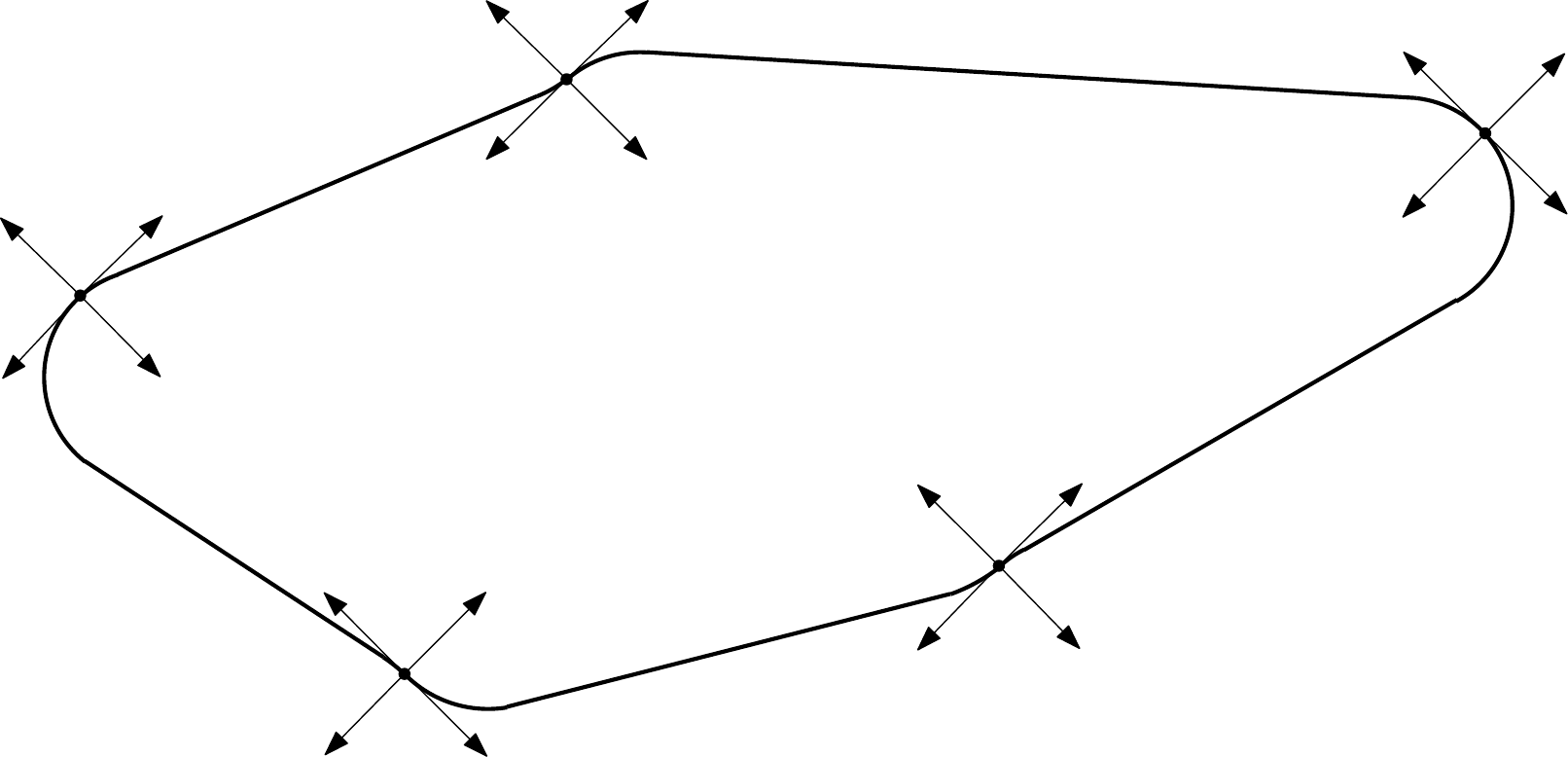}
\caption{There are four possible headings at each target. A feasible solution for the DTSP can be obtained by choosing a heading at each target and finding a corresponding optimal TSP path.}
\label{fig:ddtsp}
\end{figure}

Another common approach \cite{OberlineIEEEMagazine2010,shima} involves discretizing the heading angle at each target and posing the resulting problem as a one-in-a-set TSP (Fig. \ref{fig:ddtsp}). The greater the number of discretizations, the closer an optimal one-in-a-set TSP solution gets to the optimal DTSP solution. This approach provides a natural way to find a good, feasible solution to the problem\cite{OberlineIEEEMagazine2010}. However, this also requires us to solve a large one-in-a-set TSP, which is combinatorially hard. Nevertheless, this approach provides an upper bound \cite{shima} for the optimal cost of the DTSP, and simulation results indicate that the cost of the solutions start to converge with more than 15 discretizations at each target.

The fundamental question with regard to all the above heuristics and approximation algorithms is how close a feasible solution {\it actually is} to the optimum. For example, Fig. \ref{fig:ub_euc} shows the cost of the feasible solutions obtained by solving the one-in-a-set TSP and the ETSP for 25 instances with 20 targets in each instance. Even with 32 discretizations of the possible angles at each target, the cost of the feasible solution is at least 30\% greater than the corresponding optimal ETSP cost for several of these instances. As the optimal cost is not known for the DTSP, identifying a tight lower bound is crucial for determining the quality of the solutions that have been provided as well as for developing constant factor approximation algorithms.

\begin{figure}[!h]
\centering{}
\includegraphics[width=4in]{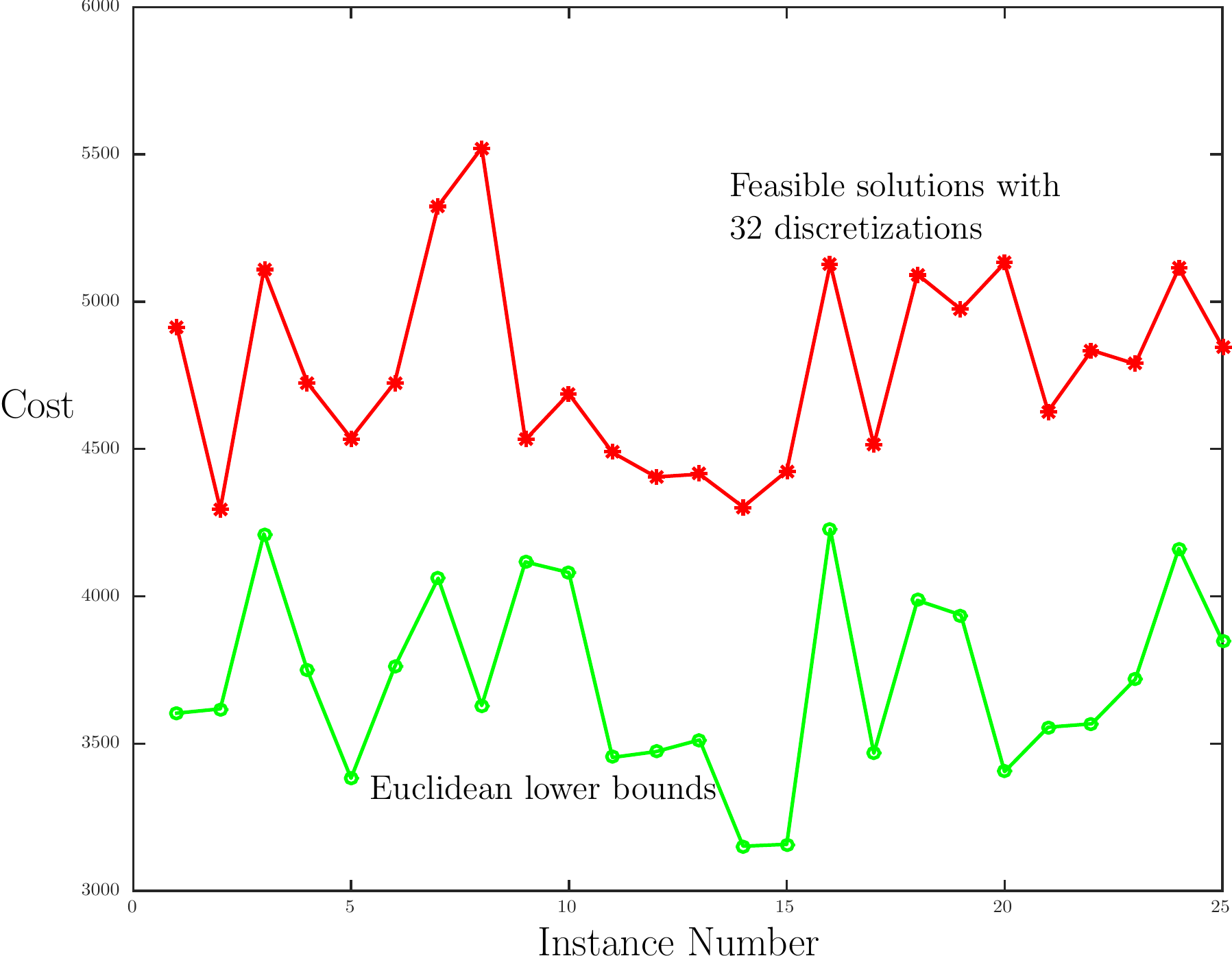}
\caption{A comparison between the cost of the feasible solution (upper bound) obtained by solving the one-in-a-set TSP with 32 discretizations and the optimal cost of the corresponding Euclidean TSP (lower bound) for 25 instances. There are 20 targets in each instance, and the location of each target is sampled from a 1000$\times$1000 square. Also, the minimum turning radius of the vehicle is set to 100.}
\label{fig:ub_euc}
\end{figure}

This fundamental question was the motivation for the bounding algorithms in \cite{manyam2015lower,manyam2015computation,manyam2012computation,manyam2013computation}. In these algorithms, the requirement that the arrival and departure angles must be equal at each target is removed, and instead there is a penalty in the objective function whenever the requirement is violated. This results in a max-min problem where the minimization problem is an asymmetric TSP (ATSP) and the cost of traveling between any two targets requires solving a new optimal control problem. In terms of lower bounding, the difficulty with this approach is that we are not currently aware of any algorithm that will guarantee a lower bound for the optimal control problem. Nonetheless, this is a useful approach, and advances in lower bounding optimal control problems will lead to finding lower bounds for the DTSP.

\begin{figure}[htb]
\centering{}
\includegraphics[width=4in]{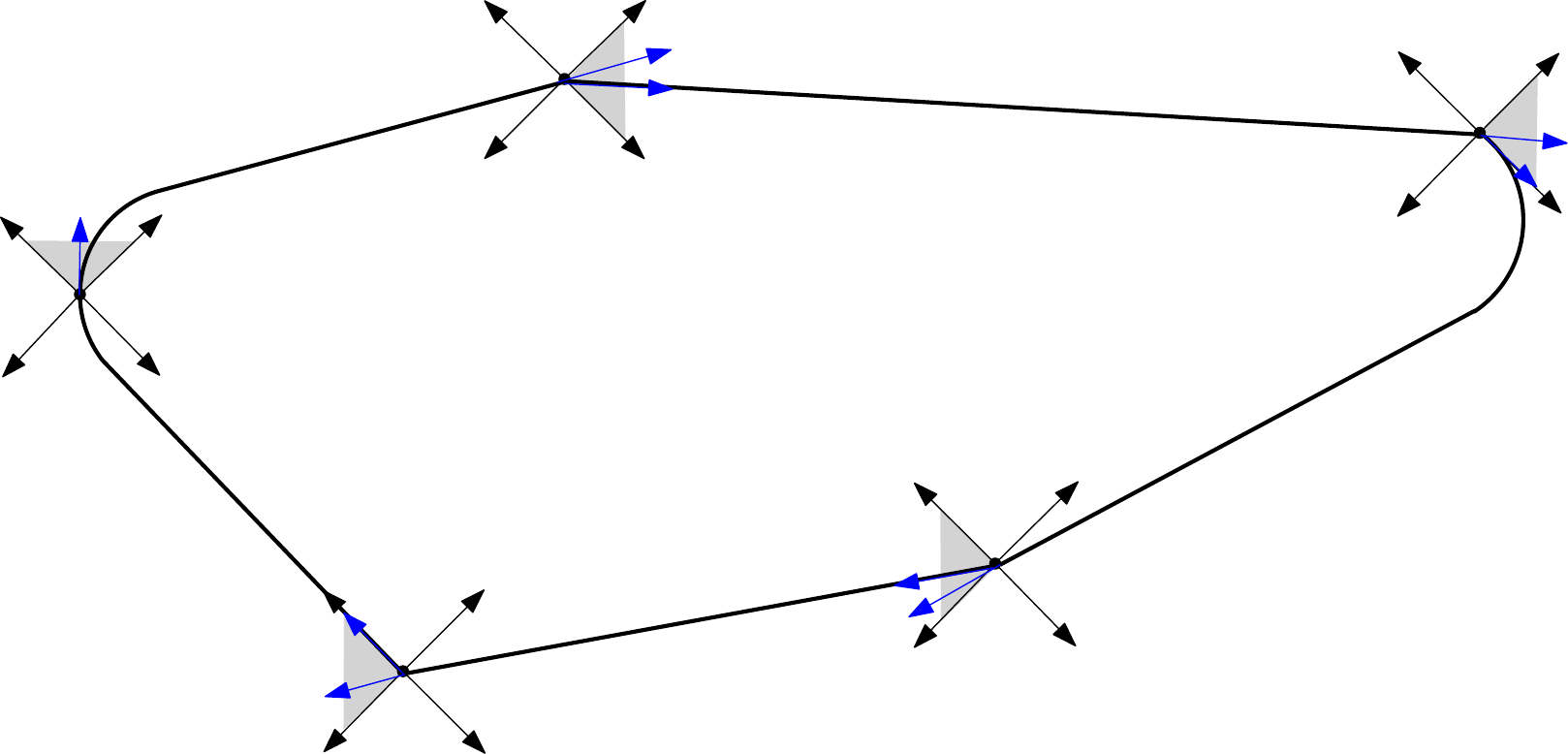}
\caption{There are four intervals at each target. A lower bound for the DTSP can be obtained by choosing an interval and restricting both the arrival and the departure angles to be in the chosen interval at each target, and then finding a corresponding optimal TSP path. The shaded interval at each target shows the chosen interval with the arrival and departure angles in blue. }
\label{fig:ldtsp}
\end{figure}

In this article, we propose a new approach to finding tight lower bounds for the DTSP. This is the first systematic procedure available for the DTSP and is a natural counterpart to the one-in-a-set TSP approach we discussed above. In this approach, we remove the requirement that the arrival angle and the departure angle at each target must be the same, but we restrain these angles so that they belong to one sector or interval (refer to Fig. \ref{fig:ldtsp}). The lower bounding problem aims to choose an interval at each target such that the arrival angle and the departure angle at the target belong to the same interval, each target is visited at least once, and the sum of the costs of travelling between the targets is minimized. The cost of traveling between two intervals corresponding to two distinct targets now reduces to a new optimal control problem, which we refer to as the \textbf{Dubins interval problem}. Given two targets and an interval at each target, the problem is to find a Dubins path such that the departure angle at the initial target and the arrival angle at the final target belong to the given intervals and the length of the path is minimal. The lower bounding problem is a one-in-a-set TSP and can be solved just like the upper bounding problem. If the size of each of the intervals at each target reduces to zero, the lower bounding problem reduces to the DTSP. If there is only one interval of size $2\pi$ at each target, the result is a Euclidean TSP. As the size of the intervals at the targets becomes smaller, the one-in-a-set TSP becomes combinatorially hard, similar to the upper bounding problem. Nevertheless, this provides a systematic approach to finding lower bounds for the DTSP, provided the Dubins interval problem can be solved.

The Dubins interval problem is a new generalization of the standard Dubins problem \cite{Dubins1957} which has not been formulated or solved\footnote{We point out that we are also working on a completely different approach which is based on optimal control in \cite{manyam2015shortest}. However, the approach in \cite{manyam2015shortest} still relies on the results of this paper. Second, unlike the analysis and results in this paper, the work in \cite{manyam2015shortest} does not provide information about the rate of change of lengths of the Dubins paths as a function of the heading angles at the targets. This information is critical and very useful in the development of bounds for feasible solutions and approximation algorithms for the DTSP.} in the literature. The difficulty with solving this problem lies in the fact that the length of the shortest Dubins paths between any two targets is a non-linear, discontinuos function of the heading angles of the targets. Therefore, finding the optimal heading angles from the given intervals at the targets that minimizes the length of the Dubins path is non-trivial. In this article, we solve the Dubins interval problem using the monotonicity properties and the extremal values of the length of the Dubins paths. \\

The following are the contributions of this article:
\begin{enumerate}
\item The formulation of the lower bounding problem for the DTSP as a novel one-in-a-set TSP where the cost of traveling between any two targets requires solving a Dubins Interval Problem. This is the first formulation that aims to provide a tight lower bound for the DTSP.
\item The first algorithm to solve the Dubins Interval Problem by exploiting its structure and monotonicity properties.
\item Numerical results to corroborate the performance of the proposed lower bounding approach for the 25 instances shown in figure \ref{fig:ub_euc}.
\end{enumerate}

%We provide an exact algorithm to find an optimal path consisting of at most two segments. We have implemented all the algorithms, and found lower bounds to the DTSP with 32 intervals at each target. Numerical results indicate that the proposed procedure significantly improves the lower bounds compared to the previously known lower bound (obtained by solving the ETSP).

\section{Lower Bounding Problem Formulation}
The set of targets is denoted by $T=\{1,2,\cdots,n\}$, where $n$ is the number of targets. The set of available angles $[0,2\pi]$ at any target $i$ is partitioned into a collection of closed intervals denoted by ${\mathcal{I}}_i := \{[0,\varphi_{i1}],[\varphi_{i1},\varphi_{i2}],\cdots,$ $[\varphi_{im_{i-1}},\varphi_{im_{i}}=2\pi]\}$, where $m_i (\geq 1)$ denotes the number of intervals at target $i$ and the $\varphi_{ij}$ are constants such that $0\leq \varphi_{i1}\leq\varphi_{i2}\leq \cdots \leq \varphi_{im_{i}}=2\pi$. Let $(x_i,y_i)$ denote the location of target $i\in T$, and let the arrival angle and the departure angle of the vehicle at target $i$ be denoted by $\theta_{ia}$ and $\theta_{id}$, respectively. The configuration of the vehicle leaving target $i$ at $\theta_{id}$ is then denoted by $(x_i,y_i,\theta_{id})$, and $(x_i,y_i,\theta_{ia})$ similarly denotes the vehicle's arrival configuration. The length of the shortest Dubins path from $(x_i,y_i,\theta_{id})$ to $(x_j,y_j,\theta_{ja})$ is denoted by $d_{ij}(\theta_{id},\theta_{ja})$. Given an interval $I_i$ at target $i$ and an interval $I_j$ at target $j$, define $d^*_{ij}(I_i,I_j):= \min_{\theta_{id}\in I_i,\theta_{ja}\in I_j} d_{ij}(\theta_{id},\theta_{ja})$. The objective of the Bounding Problem {\bf (BP)} is to find a sequence of targets $(s_1,s_2,\cdots,s_n)$, $s_i\in T$, to visit and an interval $I_{s_i}\in {\mathcal{I}}_i$ for each target $s_i\in T$ such that
\begin{itemize}
\item each target is visited at least once, and
%\item $\theta_{s_ia},\theta_{s_id}\in I_i$ $\forall i\in T$, and, , an angle of arrival $\theta_{s_ia}$ and an angle of departure $\theta_{s_id}$
\item the cost $\sum_{i=1}^{n-1} d^*_{s_is_{i+1}}(I_{s_i},I_{s_{i+1}}) + d^*_{s_ns_{1}}(I_{s_{n}},I_{s_1}) $ is minimized.
\end{itemize}

Addressing this {\bf BP}  first requires solving $\min_{\theta_{id}\in I_i,\theta_{ja}\in I_j} d_{ij}(\theta_{id},\theta_{ja})$. Once this problem is solved, the {\bf BP}  is essentially a one-in-a-set TSP. In this article, we transform the one-in-a-set TSP into an ATSP using the Noon-Bean transformation \cite{noon_lagrangian_1991} and then convert the resulting ATSP into a symmetric TSP using the transformation in \cite{TSPbook}. The symmetric TSP is solved using the Concorde solver \cite{Applegate:2007} to find an optimal solution.

Prior to addressing the Dubins Interval Problem in the next section, we first formally state the lower bounding result in the following proposition.

\begin{proposition}
The optimal cost to the {\bf BP} is a lower bound to the DTSP.
\end{proposition}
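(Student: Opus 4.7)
The plan is to prove the proposition by a direct relaxation argument: I will show that every feasible DTSP tour induces a feasible BP solution of no larger cost, and hence the optimal BP cost cannot exceed the optimal DTSP cost.

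First, I would start from an arbitrary feasible DTSP tour, visiting the targets in some order $(s_1,s_2,\ldots,s_n)$ with a common arrival/departure heading $\theta_{s_i}$ at each target $s_i$ (the DTSP feasibility enforces $\theta_{s_ia}=\theta_{s_id}=:\theta_{s_i}$). By definition of $d_{ij}$, the total length of this tour equals $\sum_{i=1}^{n-1} d_{s_is_{i+1}}(\theta_{s_i},\theta_{s_{i+1}}) + d_{s_ns_1}(\theta_{s_n},\theta_{s_1})$.

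Second, I would construct a candidate BP solution with the same target sequence. Since the collection ${\mathcal{I}}_{s_i}$ is a partition of $[0,2\pi]$ (with adjacent intervals sharing only their boundary $\varphi_{ij}$), the heading $\theta_{s_i}$ belongs to at least one interval $I_{s_i}\in{\mathcal{I}}_{s_i}$; choose any such interval (breaking ties arbitrarily at boundary points). This assignment of intervals, together with the tour order, is clearly feasible for the BP since each target is visited and an interval is selected at each.

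Third, I would compare the costs leg by leg. Because $\theta_{s_i}\in I_{s_i}$ and $\theta_{s_{i+1}}\in I_{s_{i+1}}$, the pair $(\theta_{s_i},\theta_{s_{i+1}})$ is a feasible candidate in the minimization defining $d^*_{s_is_{i+1}}(I_{s_i},I_{s_{i+1}})$, so
\begin{equation*}
d^*_{s_is_{i+1}}(I_{s_i},I_{s_{i+1}}) \;\leq\; d_{s_is_{i+1}}(\theta_{s_i},\theta_{s_{i+1}}).
\end{equation*}
Summing this inequality across all legs of the tour shows that the BP cost of the constructed candidate is no larger than the length of the original DTSP tour. Taking the infimum over DTSP feasible tours on the right and noting that the left side is at least the optimal BP cost yields the desired inequality.

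The argument is essentially a relaxation proof, so there is no deep obstacle; the only minor subtlety is ensuring that $d^*_{ij}(I_i,I_j)$ is well defined as an infimum (attained because the Dubins length is lower semicontinuous in the heading angles on the closed intervals $I_i,I_j$), and ensuring an interval assignment exists when $\theta_{s_i}$ coincides with a partition endpoint, which is handled by simply selecting either adjacent interval.
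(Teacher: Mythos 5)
Your proof is correct and follows essentially the same relaxation argument as the paper: any feasible DTSP tour induces a feasible BP solution (by selecting the interval containing each heading) whose cost is no larger, hence the optimal BP cost lower-bounds the DTSP optimum. Your version is simply a more careful elaboration of the paper's one-sentence proof, correctly noting that the induced BP cost is bounded above by (rather than equal to) the tour length because $d^*_{ij}$ minimizes over the chosen intervals.
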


\begin{proof}
Any optimal solution to the DTSP is a feasible solution to the {\bf BP}  for any positive number of intervals at each target. Therefore, the optimal cost of the {\bf BP}  must be a lower bound to the optimal cost of the DTSP. 
\end{proof}

\section{Dubins Interval Problem}
Without loss of generality, let the Dubins interval problem be denoted as $\min_{\theta_{1}\in I_1,\theta_{2}\in I_2} d_{12}(\theta_{1},\theta_{2})$, where $d_{12}(\theta_{1},\theta_{2})$ indicates the shortest path (also referred to as the Dubins  path) for traveling from $(x_{1},y_{1},\theta_{1})$ to $(x_{2},y_{2},\theta_{2})$ subject to the minimum turning radius constraint (Fig. \ref{fig:dI}). Here the interval $I_k$ is defined as $[\theta^{min}_k,\theta^{max}_k]\subseteq [0,2\pi]$ for $k=1,2$. Given an initial configuration $(x_{1},y_{1},\theta_{1})$ and a final configuration $(x_{2},y_{2},\theta_{2})$, L.E. Dubins \cite{Dubins1957} showed that the shortest path for a vehicle to travel between the two configurations subject to the minimum turning radius ($\rho$) constraint must consist of at most three segments, where each segment is a circle of radius $\rho$ or a straight line. Specifically, if a curved segment of radius $\rho$ along which the vehicle travels in a counterclockwise (clockwise) rotational motion is denoted by $L(R)$, and the segment along which the vehicle travels straight is denoted by $S$, then the shortest path is one of $RSR$, $RSL$, $LSR$, $LSL$, $RLR$, and $LRL$.

\begin{figure}[htb]
\centering{}
\includegraphics[scale=1]{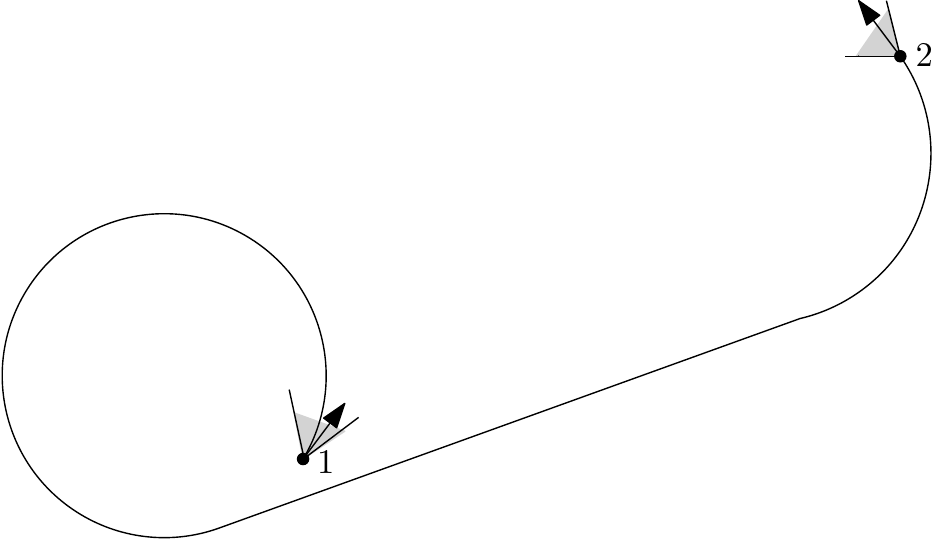}
\caption{A feasible solution to the Dubins interval problem.}
\label{fig:dI}
\end{figure}

Let $RSL(\theta_{1},\theta_{2})$ denote the length of the $RSL$ path from $(x_{1},y_{1},\theta_{1})$ to $(x_{2},y_{2},\theta_{2})$. $RSL(\theta_1,\theta_2)$ is set to $\infty$ if the $RSL$ path does not exist. Let $RSR(\theta_{1},\theta_{2})$, $LSR(\theta_{1},\theta_{2})$, $LSL(\theta_{1},\theta_{2})$, $RLR(\theta_{1},\theta_{2})$, and $LRL(\theta_{1},\theta_{2})$ be defined in a similar way. Using these definitions, the Dubins interval problem can be written as follows:

\begin{align}\label{main:problem}
\min_{\theta_{1}\in I_1,\theta_{2}\in I_2} d_{12}(\theta_{1},\theta_{2}) &= \min_{\theta_{1}\in I_1,\theta_{2}\in I_2}\{RSR(\theta_{1},\theta_{2}),RSL(\theta_{1},\theta_{2}), LSR(\theta_{1},\theta_{2}), LSL(\theta_{1},\theta_{2}), RLR(\theta_{1},\theta_{2}),LRL(\theta_{1},\theta_{2})\}.
\end{align}

\begin{remark}
 $d_{12}(\theta_{1},\theta_{2})$ is a lower semicontinuous function and is minimized over closed and bounded intervals $I_1$ and $I_2$. Therefore, the Dubins interval problem is well defined, $i.e.$, there exist $\theta_1^*\in I_1$ and $\theta_2^* \in I_2$ such that  $d_{12}(\theta_{1}^*,\theta_{2}^*)=  \min_{\theta_{1}\in I_1,\theta_{2}\in I_2} d_{12}(\theta_{1},\theta_{2})$.
\end{remark}

To solve the Dubins interval problem, we also consider shortest paths that contain at most two segments between $(x_1,y_1)$ and $(x_2,y_2)$. For any path ${\mathcal{T}}\in \{RS, LS, SR, SL, RL, LR\}$ and $\theta_1\in I_1$, let ${\mathcal{T}}^1(\theta_1)$ denote the distance of the shortest path of type ${\mathcal{T}}$ that starts at $(x_1,y_1)$ with a departure angle of $\theta_1$ and arrives at $(x_2,y_2)$ with an arrival angle in $I_2$. In this case, the arrival angle at $(x_2,y_2)$ will be a function of $\theta_1$ and ${\mathcal{T}}$ and is denoted as $\theta_2({\mathcal{T}},\theta_1)$. ${\mathcal{T}}^1(\theta_1)$ is set to $\infty$ if a path of type ${\mathcal{T}}$ does not exist or if $\theta_2({\mathcal{T}},\theta_1) \notin I_2$.
 Similarly, let ${\mathcal{T}}^2(\theta_2)$ denote the distance of the shortest path of type ${\mathcal{T}}$ that starts at $(x_1,y_1)$ with a departure angle in $I_1$ and arrives at $(x_2,y_2)$ with an arrival angle of $\theta_2$. In this case, the departure angle at $(x_1,y_1)$ will be a function of $\theta_2$ and ${\mathcal{T}}$ and is denoted as $\theta_1({\mathcal{T}},\theta_2)$. ${\mathcal{T}}^2(\theta_2)$ is set to $\infty$ if the path of type ${\mathcal{T}}$ does not exist or if $\theta_1({\mathcal{T}},\theta_2) \notin I_1$. From the definitions, note that $\min_{\theta_1\in I_1}{\mathcal{T}}^1(\theta_1) = \min_{\theta_2\in I_2}{\mathcal{T}}^2(\theta_2) $. \\

 %\textbf{Remark:}

The following theorem provides a way to further simplify equation \eqref{main:problem} and solve the Dubins interval problem:

\begin{theorem}\label{theorem:main}
\begin{align}
\min_{\theta_1 \in I_1} \min_{\theta_{2}\in I_2}\{d_{12}(\theta_{1},\theta_{2})\}  & = \min\{d^*, \min_{\theta_{1} \in I_1}\{RS^1(\theta_{1}),SR^1(\theta_{1}),LS^1(\theta_{1}),SL^1(\theta_{1}),LR^1(\theta_{1}),RL^1(\theta_{1})\}\}
\end{align}

where \[d^*: = \min\{d_{12}(\theta_{1}^{min},\theta_{2}^{min}),d_{12}(\theta_{1}^{max},\theta_{2}^{min}),d_{12}(\theta_{1}^{min},\theta_{2}^{max}),d_{12}(\theta_{1}^{max},\theta_{2}^{max})\}.\]
\end{theorem}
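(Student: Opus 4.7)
The plan is to establish LHS $\leq$ RHS and LHS $\geq$ RHS separately.

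The inequality LHS $\leq$ RHS is immediate: each of the four corner values comprising $d^*$ is the length of a feasible Dubins path with endpoints in $I_1 \times I_2$, and whenever $\mathcal{T}^1(\theta_1)$ is finite, it is the length of a two-segment path with starting angle $\theta_1 \in I_1$ and induced terminal angle $\theta_2(\mathcal{T}, \theta_1) \in I_2$, which can be viewed as a degenerate three-segment Dubins candidate with one segment of zero length. Thus every quantity on the right-hand side is an upper bound for LHS, and taking the minimum preserves the inequality.

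For the reverse inequality, the remark preceding the theorem guarantees the existence of a minimizer $(\theta_1^*, \theta_2^*)$. I would proceed by case analysis on the location of this minimizer. If both coordinates are interval endpoints, then $d_{12}(\theta_1^*, \theta_2^*)$ is one of the four corner values, so LHS $\geq d^* \geq$ RHS. Otherwise at least one coordinate, say $\theta_1^*$, lies in the open interior of $I_1$. Let $T^* \in \{RSR, RSL, LSR, LSL, RLR, LRL\}$ be a path type attaining $d_{12}(\theta_1^*, \theta_2^*)$. The key claim is that the first-order condition $\partial T^*/\partial \theta_1 = 0$ at the minimizer forces the first arc of $T^*$ to have zero length, collapsing $T^*$ to a two-segment type in $\{SR, SL, LR, RL\}$; the symmetric case in which $\theta_2^*$ is interior collapses the last arc and yields a type in $\{RS, LS, RL, LR\}$. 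The union of these two sets is exactly $\{RS, SR, LS, SL, LR, RL\}$, so in either situation $d_{12}(\theta_1^*, \theta_2^*) = \mathcal{T}^1(\theta_1^*)$ for some two-segment type $\mathcal{T}$, giving LHS $\geq \min_{\theta_1 \in I_1} \mathcal{T}^1(\theta_1) \geq$ RHS.

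The technical core is therefore the degeneracy claim, which I would prove by direct calculation on each of the six path types. As a representative case, for $RSR$ the length decomposes as $\rho \phi_1 + D + \rho \phi_2$, where $\phi_1, \phi_2$ are the two arc angles, $D$ is the distance between the two right-turning circle centers $C_1, C_2$, and the arc angles satisfy $\phi_1 + \phi_2 \equiv \theta_1 - \theta_2 \pmod{2\pi}$. Differentiation with respect to $\theta_1$ (with $\theta_2$ fixed) yields the clean expression $\rho(1 - \cos(\alpha - \theta_1))$, where $\alpha$ denotes the direction of $C_2 - C_1$; this quantity is non-negative and vanishes precisely when $\alpha \equiv \theta_1$, meaning that the initial heading already points along the tangent line and therefore $\phi_1 = 0$. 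The remaining CSC types $RSL$, $LSR$, $LSL$ follow by analogous calculations with appropriate sign changes, while the CCC types $RLR, LRL$ require using the isoceles relation $|C_1 C_2| = |C_2 C_3| = 2\rho$ to parametrize the middle circle in terms of the outer center distance.

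The main obstacle I anticipate is executing these derivative computations uniformly across all six path types, particularly the CCC cases whose middle-circle parametrization is more delicate, and then verifying that the vanishing-derivative condition forces the targeted arc to shrink to zero rather than to $2\pi$. A secondary subtlety is that $d_{12}(\theta_1, \theta_2)$ is the pointwise minimum of six smooth pieces, so it can fail to be differentiable where two or more path types coincide; the argument must be phrased so that it suffices for any one active type at the minimizer to satisfy the first-order condition, which is legitimate via one-sided directional derivatives on the min function.
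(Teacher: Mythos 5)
Your two-inequality architecture is sound, the easy direction is fine, and your treatment of the four $CSC$ types is essentially the paper's own argument: there the partial derivative of the length with respect to each heading is sign-definite (of the form $\rho(1-\cos(\cdot))$, as you compute for $RSR$), so on each smooth piece the length is monotone and the minimum over one angle lands either at an interval endpoint or at a degeneracy; this is exactly Lemma~\ref{lem:csc}. The genuine gap is your ``key claim'' as applied to the $CCC$ types $RLR$ and $LRL$: it is \emph{false} there that a vanishing first derivative forces an arc to collapse. For $LRL$ with $\theta_1$ fixed the paper computes $\frac{d\mathfrak{D}}{d\theta_2}=\rho\bigl(\frac{\cos(\theta_2-\alpha)+\cos(\theta_2+\beta)}{\sin(\alpha+\beta)}+1\bigr)$, which is not sign-definite and vanishes at $\theta_2=\alpha+\pi/2$, a configuration in which all three arcs are present. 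The theorem survives only because a second-order computation, $\frac{d^2\mathfrak{D}}{d\theta_2^2}=-\frac{3\rho}{2}\,\frac{1+\cos(\alpha+\beta)}{\sin(\alpha+\beta)}<0$, shows that this interior critical point is a local \emph{maximum} and hence never the minimizer (Lemma~\ref{lem:cc}). Your plan contains no second-order step, and the obstacle you anticipate for the $CCC$ cases (an arc shrinking to $2\pi$ rather than $0$) is not the one that actually arises; carrying out your ``direct calculation'' for $LRL$ would produce a critical point that your argument cannot dispose of.

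A second, smaller omission: you enumerate only the degeneracies in which the first or last arc collapses, but the one-variable minimum of a fixed type can also be attained on the boundary of that type's \emph{existence domain}, where the middle segment degenerates. For $CSC$ this is where $S$ vanishes and the path becomes $RL$ or $LR$, which happens to land inside your candidate set; but for $LRL$ and $RLR$ the existence boundary is where the middle arc equals $\pi$, giving genuine three-arc candidates (the paper's $LRL^1_a$, $LRL^1_b$) that do \emph{not} appear in the theorem's right-hand side. The paper eliminates them by invoking the fact that an optimal $CCC$ Dubins path must have middle arc strictly greater than $\pi$, so these terms drop out once the minimum over all six types is taken. Without both of these repairs -- the second-order classification of the interior critical point and the elimination of the middle-arc-$\pi$ boundary candidates -- your lower bound on the left-hand side does not close.
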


In English, this theorem states that an optimal path to the Dubins interval problem must be one of the following:
\begin{enumerate}
\item An optimal Dubins path consisting of at most three segments such that both the arrival and departure angles at each target belong to one of the boundary values of the respective intervals, or
\item An optimal Dubins path consisting of at most two segments such that the angle constraints are satisfied.
\end{enumerate}

After proving this theorem in the next subsection, we will provide algorithms to solve for the optimal Dubins paths with at most two segments (i.e., to solve $\min_{\theta_{1} \in I_1} {\mathcal{P}}(\theta_{1})$ for any path ${\mathcal{P}}\in\{RS^1, SR^1, LS^1, SL^1, LR^1, RL^1\}$). As $d^*$ in the above theorem can already be computed directly using Dubins's result\cite{Dubins1957}, one can then compute the optimal cost for the Dubins interval problem and the corresponding departure and arrival angles.

\subsection{Proof of Theorem \ref{theorem:main}}%\label{sec:CSC}

This theorem will be proved in two parts. First, we will first show how to simplify $\min_{\theta_{1}\in I_1,\theta_{2}\in I_2} {\mathcal{P}}(\theta_{1},\theta_{2})$ for any path ${\mathcal{P}}\in\{RSR, RSL, LSR, LSL\}$. Then we will address the $LRL$ and the $RLR$ paths.\\

\subsubsection{\textbf{Optimizing RSR, RSL, LSR, and LSL paths}}\label{subsec:CSC}

The following result is known \cite{xavier} for each of the paths ${\mathcal{P}} \in \{RSR, RSL, LSR, LSL\}$ from $(x_{1},y_{1},\theta_{1})$ to $(x_{2},y_{2},\theta_{2})$: \\
\begin{lemma}\label{lem:csc}
For any ${\mathcal{P}} \in \{RSR, RSL, LSR, LSL\}$ and $i=1,2$, either $\frac{\partial {\mathcal{P}}(\theta_1,\theta_2)}{\partial \theta_i} \geq 0$ $\forall ~\theta_i$ or $\frac{\partial {\mathcal{P}}(\theta_1,\theta_2)}{\partial \theta_i} \leq 0$ $\forall ~\theta_i$ when $\mathcal{P}$ exists and none of its curved segments vanish. \\
\end{lemma}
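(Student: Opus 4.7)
The plan is to derive a closed-form expression for each of the four CSC path lengths and then differentiate directly, showing that every partial derivative reduces to a quantity of the form $\rho(\pm 1\pm \cos\phi_i)$ for some arc angle $\phi_i$, so that its sign is fixed by geometry and independent of $\theta_1,\theta_2$. The existence hypothesis will exactly match the domain on which the expressions are smoothly differentiable.

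First I would set up coordinates. For each path type the two tangent circles have centers of the form
\begin{equation*}
O_k = (x_k \mp \rho\sin\theta_k,\ y_k \pm \rho\cos\theta_k),\quad k=1,2,
\end{equation*}
with signs chosen according to the turning direction ($L$ or $R$). Let $R = \|O_2 - O_1\|$ and $\psi = \arg(O_2 - O_1)$. For the outer-tangent families $LSL$ and $RSR$ the straight segment lies along $O_1O_2$, so its length is $D = R$; for the inner-tangent families $LSR$ and $RSL$ one has instead $D = \sqrt{R^2 - 4\rho^2}$, and the straight segment's direction is $\psi\pm \delta$ with $\delta = \arctan(2\rho/D)$. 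In each case the total length decomposes as $\mathcal{P}(\theta_1,\theta_2) = \rho\phi_1 + D + \rho\phi_2$, where $\phi_1,\phi_2 \in [0,2\pi)$ are affine combinations of $\theta_1,\theta_2$ and $\psi\pm\delta$.

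Next I would differentiate. From $\partial_{\theta_1} O_1 = -\rho(\cos\theta_1,\sin\theta_1)$ (and the analogous formula for $O_2$ with the $R$/$L$ sign) the standard identities for derivatives of the norm and the argument give
\begin{equation*}
\frac{\partial R}{\partial\theta_1} = \rho\cos\phi_1,\qquad \frac{\partial \psi}{\partial\theta_1} = -\frac{\rho\sin\phi_1}{R},
\end{equation*}
and symmetric formulas in $\theta_2$. Substituting into $LSL = \rho\phi_1 + R + \rho\phi_2$, the $\partial\psi/\partial\theta_1$ contributions coming from $\phi_1$ and $\phi_2$ cancel, leaving
\begin{equation*}
\frac{\partial LSL}{\partial\theta_1} = \rho(\cos\phi_1 - 1)\le 0,\qquad \frac{\partial LSL}{\partial\theta_2} = \rho(1-\cos\phi_2)\ge 0.
\end{equation*}
The reflection $y\mapsto -y$ interchanges $L$ and $R$ and reverses the roles of $\theta_1,\theta_2$, yielding the corresponding constant-sign statements for $RSR$. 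For $LSR$ and $RSL$ the same computation applies with $D$ in place of $R$ and with the additional $\arctan(2\rho/D)$ term $\delta$ contributing to both $\phi_1$ and $\phi_2$; differentiating $\delta$ produces paired terms with opposite signs in $\partial\phi_1$ and $\partial\phi_2$ which again cancel, leaving the clean expressions $\rho(\pm 1\pm\cos\phi_i)$.

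The main obstacle I anticipate is the careful bookkeeping for the inner-tangent cases $LSR$ and $RSL$, where both arc angles depend on the tilted direction $\psi+\delta$ and one must verify the explicit cancellation between the $\delta$-derivatives inside $\phi_1,\phi_2$ and the corresponding terms in $\partial D/\partial\theta_i$. The hypothesis that $\mathcal{P}$ exists and none of its curved segments vanishes is exactly what justifies differentiating smoothly: each $\phi_i$ lies strictly in the open interval $(0,2\pi)$, so we can differentiate through the $\bmod 2\pi$, and in the inner-tangent case $R>2\rho$ guarantees that the square root defining $D$ is differentiable.
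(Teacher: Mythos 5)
Your derivation is sound, but note first that the paper does not actually prove this lemma: it imports it as a known result from \cite{xavier} and immediately moves on to exploiting the monotonicity. So your proposal is not an alternative to the paper's argument so much as a self-contained substitute for the citation, which is a genuine addition. The computation you outline is the standard one and it checks out: for $LSL$ the cancellation of the $\partial\psi/\partial\theta_i$ contributions between the two arc angles happens exactly as you describe, giving $\partial LSL/\partial\theta_1=\rho(\cos\phi_1-1)\le 0$ and $\partial LSL/\partial\theta_2=\rho(1-\cos\phi_2)\ge 0$, with the hypothesis that no arc vanishes guaranteeing $\phi_i\in(0,2\pi)$ so the $\bmod\,2\pi$ causes no trouble. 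Two details deserve tightening. First, for the inner-tangent families the $\psi$-contributions from $\phi_1$ and $\phi_2$ do \emph{not} cancel: both arcs are measured against the same tilted tangent direction $\mu=\psi-\delta$, so those terms add, and the collapse to $\rho(1-\cos\phi_1)$ only occurs after combining everything into $\rho+\tfrac{\rho}{R}\bigl(2\rho\sin(\phi_1-\delta)-D\cos(\phi_1-\delta)\bigr)$ and substituting $2\rho=R\sin\delta$, $D=R\cos\delta$; describing this as a cancellation between the $\delta$-derivatives and $\partial D/\partial\theta_i$ undersells the bookkeeping, which is precisely the step you flagged as the main obstacle, and it does go through. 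Second, the reflection $y\mapsto -y$ sends $LSL$ at $(\theta_1,\theta_2)$ to $RSR$ at $(-\theta_1,-\theta_2)$ --- it negates the headings rather than swapping their roles (swapping endpoints is what path reversal does) --- but either symmetry flips the sign of each partial derivative uniformly, so the constant-sign conclusion for $RSR$, and likewise the $LSR$/$RSL$ pairing, still follows.
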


\begin{figure}[htb]
\centering{}
\includegraphics[scale=.6]{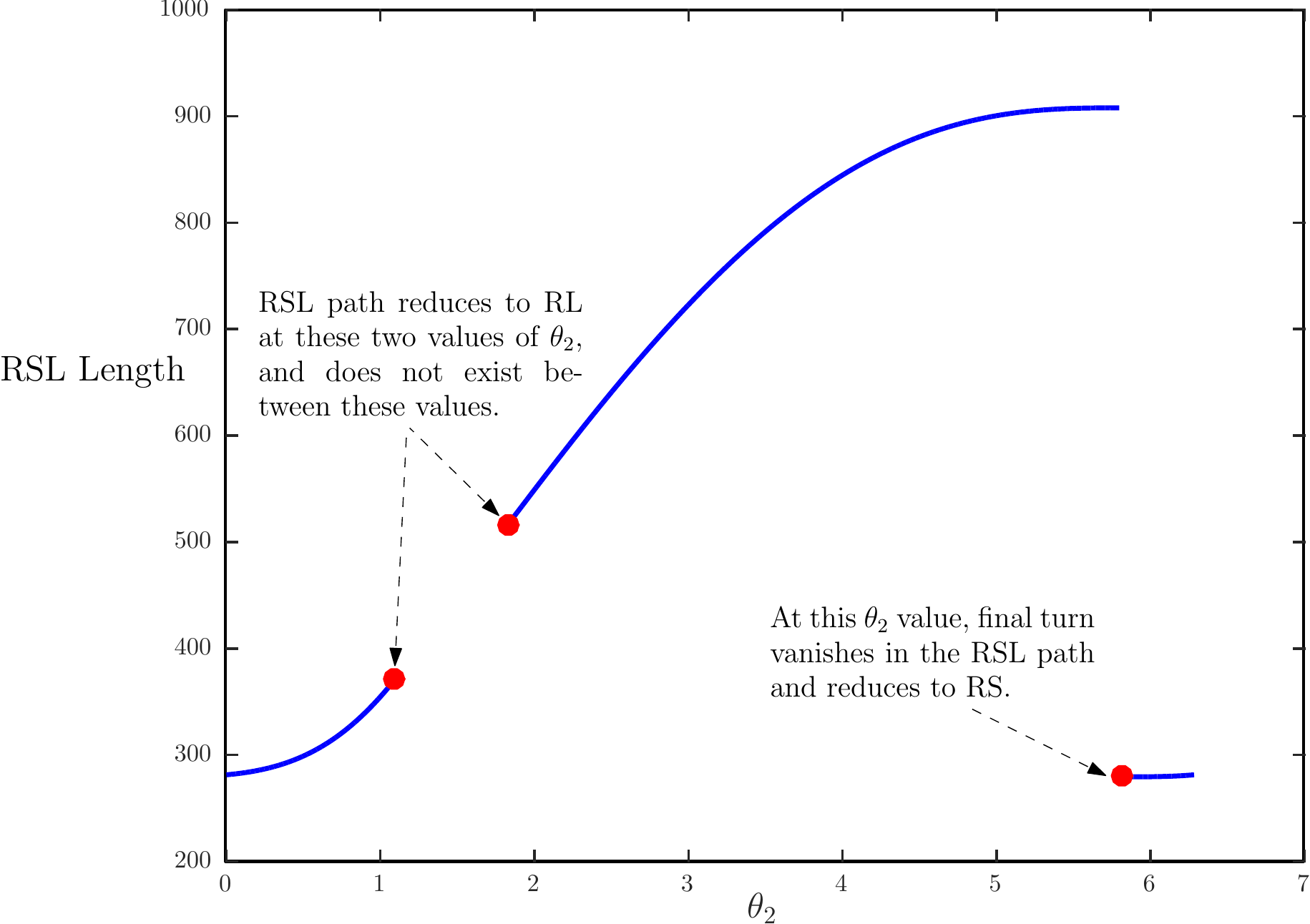}
\caption{Given $\theta_1$, the length of the RSL path varies monotonically with respect to $\theta_2$ wherever the path exists and none of its curved segments vanish.}
\label{fig:RSL}
\end{figure}

Now let us apply the above lemma to the $RSL$ path. The $RSL$ path ceases to exist when the segment $S$ vanishes, i.e., the $RSL$ path reduces to an $RL$ path. In addition, when one of the curved segments vanishes, the $RSL$ path reduces to either the $RS$ or the $SL$ path (refer to Fig. \ref{fig:RSL}). Therefore, given $\theta_1$, the optimum for $\min_{\theta_2 \in [\theta^{min}_2,\theta^{max}_2]}RSL(\theta_{1},\theta_{2})$ must be attained when $\theta_2=\theta^{min}_2$ or $\theta_2=\theta^{max}_2$ or when the $RSL$ path reduces to an $RL$, $RS$, or  $SL$ path. This can be stated as follows:

\begin{align}
\min_{\theta_{2}\in I_2}\{RSL(\theta_{1},\theta_{2})\} & := \min\{RSL(\theta_{1},\theta_{2}^{min}),RSL(\theta_{1},\theta_{2}^{max}), RS^1(\theta_{1}),SL^1(\theta_{1}),RL^1(\theta_{1})\}.
\end{align}

Therefore,
\begin{align}
& \min_{\theta_{1} \in I_1}\min_{\theta_{2}\in I_2}\{RSL(\theta_{1},\theta_{2})\} \nonumber \\ & = \min_{\theta_{1}\in I_1} \min\{RSL(\theta_{1},\theta_{2}^{min}),RSL(\theta_{1},\theta_{2}^{max}), RS^1(\theta_{1}),SL^1(\theta_{1}),RL^1(\theta_{1})\} \nonumber \\
& =  \min\{\min_{\theta_{1}\in I_1}RSL(\theta_{1},\theta_{2}^{min}),\min_{\theta_{1}\in I_1} RSL(\theta_{1},\theta_{2}^{max}), \min_{\theta_{1}\in I_1}\{RS^1(\theta_{1}),SL^1(\theta_{1}),RL^1(\theta_{1})\}\}. \label{eq:minRSL}
\end{align}

Similarly, using Lemma \ref{lem:csc} again, we get the following:
\begin{align}
\min_{\theta_{1} \in I_1}RSL(\theta_{1},\theta_{2}^{min}) & = \min\{RSL(\theta_{1}^{min},\theta_{2}^{min}),RSL(\theta_{1}^{max},\theta_{2}^{min}), RS^2(\theta_{2}^{min}),SL^2(\theta_{2}^{min}),RL^2(\theta_{2}^{min})\}, \label{eq:rsl_min} \\
\min_{\theta_{1} \in I_1}RSL(\theta_{1},\theta_{2}^{max}) & = \min\{RSL(\theta_{1}^{min},\theta_{2}^{max}),RSL(\theta_{1}^{max},\theta_{2}^{max}), RS^2(\theta_{2}^{max}),SL^2(\theta_{2}^{max}),RL^2(\theta_{2}^{max})\}. \label{eq:rsl_max}
\end{align}

Now, one can easily verify the following:
\begin{align}
 \text{For any }{\mathcal{T}}\in\{RS,SL,RL\}, ~\min_{\theta_1\in I_1}{\mathcal{T}} ^1(\theta_1) \leq {\mathcal{T}} ^2(\theta_2^{min})\text{ and } \min_{\theta_1\in I_1}{\mathcal{T}} ^1(\theta_1) \leq {\mathcal{T}} ^2(\theta_2^{max}). \label{eq:popt}
\end{align}

Substituting for $\min_{\theta_{1} \in I_1}RSL(\theta_{1},\theta_{2}^{min})$ and $\min_{\theta_{1} \in I_1}RSL(\theta_{1},\theta_{2}^{max})$ in \eqref{eq:minRSL} using equations \eqref{eq:rsl_min} and \eqref{eq:rsl_max} and simplifying further using \eqref{eq:popt}, we get

\begin{align}\label{main:RSL}
\min_{\theta_1 \in I_1} \min_{\theta_{2}\in I_2} RSL(\theta_{1},\theta_{2})  & = \min\{RSL^*, \min_{\theta_{1} \in I_1}\{RS^1(\theta_{1}),SL^1(\theta_{1}),RL^1(\theta_{1})\}\},
\end{align}

where \[RSL^*: = \min\{RSL(\theta_{1}^{min},\theta_{2}^{min}),RSL(\theta_{1}^{max},\theta_{2}^{min}),RSL(\theta_{1}^{min},\theta_{2}^{max}),RSL(\theta_{1}^{max},\theta_{2}^{max})\}.\]

As Lemma \ref{lem:csc} is also applicable to $RSR$, $LSL$, and $LSR$ paths, one can use the above procedure and simplify $\min_{\theta_{1}\in I_1,\theta_{2}\in I_2} {RSR}(\theta_{1},\theta_{2})$, $\min_{\theta_{1}\in I_1,\theta_{2}\in I_2} {LSL}(\theta_{1},\theta_{2})$, and $\min_{\theta_{1}\in I_1,\theta_{2}\in I_2} {LSR}(\theta_{1},\theta_{2})$ in a similar way. Combining all these results, we obtain the following:

\begin{align}\label{main:CSC}
\min_{\theta_1 \in I_1} \min_{\theta_{2}\in I_2} \min_{{\mathcal{P}} \in \{RSR, RSL, LSR, LSL\}} {\mathcal{P}}(\theta_{1},\theta_{2})  & = \min\{\mathcal{P}^*, \min_{\theta_{1} \in I_1}\{RS^1(\theta_{1}),SR^1(\theta_{1}),LS^1(\theta_{1}),SL^1(\theta_{1}),LR^1(\theta_{1}),RL^1(\theta_{1})\}\}
\end{align}

where \[{\mathcal{P}}^*: =  \min_{{\mathcal{P}} \in \{RSR, RSL, LSR, LSL\}} \min\{{\mathcal{P}}(\theta_{1}^{min},\theta_{2}^{min}),{\mathcal{P}}(\theta_{1}^{max},\theta_{2}^{min}),{\mathcal{P}}(\theta_{1}^{min},\theta_{2}^{max}),{\mathcal{P}}(\theta_{1}^{max},\theta_{2}^{max})\}.\] \\

\subsubsection{\textbf{Optimizing RLR and LRL paths}}

Xavier et al. \cite{xavier} have shown that the $RLR$ and $LRL$ paths cannot lead to an optimal Dubins path if the distance between the two targets is greater than $4\rho$. Therefore, in this section, we assume that the distance between the two targets is at most $4\rho$. We will focus on $\min_{\theta_1\in I_1, \theta_2\in I_2} LRL(\theta_1,\theta_2)$; $\min_{\theta_1\in I_1, \theta_2\in I_2} RLR(\theta_1,\theta_2)$ can be solved in a similar way. Given $\theta_1$, unlike the length of the $RSL$ path, $LRL(\theta_1,\theta_2)$ is not monotonous with respect to $\theta_{2}$ when $LRL$ exists. Without loss of generality, we assume that $\theta_1$ = 0 and first aim to understand $LRL(0,\theta_2)$ as a function of $\theta_2$ (refer to Fig. \ref{fig:LRL_proof}). Target 1 is located at the origin and target 2 is located at $(\bar{x},\bar{y})$. The angles $\alpha$ and $\beta$ in Fig. \ref{fig:LRL_proof} are functions of $\theta_2$. For brevity, we use $\alpha$ and $\beta$ in place of $\alpha(\theta_2)$ and $\beta(\theta_2)$, respectively.  Let $LRL(0,\theta_2)$ be denoted as ${\mathfrak{D}}(\theta_2):=(2\pi+ 2\alpha + 2\beta + \theta_2)\rho$. In the ensuing discussion, we use the fact that the length of the $R$ segment in an $LRL$ path must be greater than $\pi\rho$ (i.e., $0<\alpha+\beta<\pi$) for the $LRL$ path to be an optimal Dubins path between any two targets \cite{Dubins1957,boissonat}.

\begin{figure}[htb]
\centering{}
\includegraphics[scale=1]{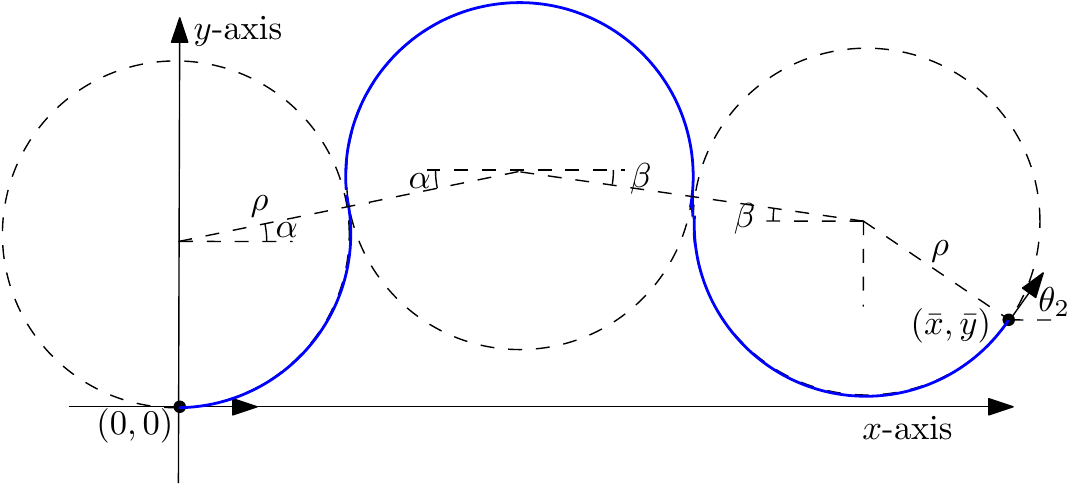}
\caption{$LRL$ path for $\theta_1=0$.}
\label{fig:LRL_proof}
\end{figure}

\begin{lemma}\label{lem:cc}
If the $LRL$ path exists and none of its curved segments vanish, then for any $\theta_2$ such that $0<\alpha(\theta_2)+\beta(\theta_2)<\pi$, $\frac{{\mathfrak{dD}}}{d\theta_2} \neq 0$ except when ${\mathfrak{D}}(\theta_2)$ reaches a maximum, when $\theta_2$ satisfies $\alpha+\pi/2 =\theta_2$.
\end{lemma}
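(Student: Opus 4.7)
The plan is to differentiate the explicit length expression $\mathfrak{D}(\theta_2) = (2\pi + 2\alpha + 2\beta + \theta_2)\rho$ using the chain rule and the geometric relations tying $\alpha$ and $\beta$ to $\theta_2$. I would set up coordinates with target $1$ at the origin and $\theta_1 = 0$, so that the initial $L$-circle has center $C_1 = (0,\rho)$, the terminal $L$-circle has center $C_3(\theta_2) = (\bar{x} - \rho\sin\theta_2,\; \bar{y} + \rho\cos\theta_2)$, and the middle $R$-circle has its center $C_2$ on the perpendicular bisector of $C_1 C_3$ at distance $2\rho$ from each of $C_1$ and $C_3$ (since the $R$-circle must be externally tangent to both $L$-circles).

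Next, I would identify $\alpha(\theta_2)$ and $\beta(\theta_2)$ explicitly as angles in the isosceles triangle $C_1 C_2 C_3$ combined with the polar angle of the vector $C_3 - C_1$. Writing $D(\theta_2) := |C_1 C_3(\theta_2)|$ and applying the law of cosines gives $\cos\gamma = D/(4\rho)$, where $\gamma$ is the half-angle at the apex $C_2$; then $\alpha$ and $\beta$ each decompose into a known orientation-dependent angle plus $\pm\gamma$. Differentiating $D^2(\theta_2) = (\bar{x} - \rho\sin\theta_2)^2 + (\bar{y} + \rho\cos\theta_2 - \rho)^2$ yields $D\,D' = \rho(\bar{y}\cos\theta_2 - \bar{x}\sin\theta_2 - \rho\cos\theta_2)$, and a similar computation gives the derivative of the polar angle of $C_3 - C_1$.

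Substituting into $\frac{d\mathfrak{D}}{d\theta_2} = (2\alpha' + 2\beta' + 1)\rho$ and setting it equal to zero, the $\gamma'$-contributions (both arising from $D'$) enter $\alpha'$ and $\beta'$ with opposite signs and should cancel, leaving a condition purely on the orientation of $C_1 C_3$ relative to the terminal tangent direction at target $2$. Interpreting that condition geometrically produces the stated equation $\theta_2 = \alpha + \pi/2$. The assumption $0 < \alpha + \beta < \pi$ excludes the degenerate configurations in which a tangency point collapses, the $R$ arc vanishes, or the isosceles triangle $C_1 C_2 C_3$ degenerates to a segment.

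The main obstacle will be the algebraic bookkeeping required to separate $\alpha'(\theta_2)$ from $\beta'(\theta_2)$, since they share a common contribution through $D'$ but enter with opposite signs via $\pm\gamma$; keeping orientations consistent with the directions of travel on each circle is the most error-prone part. To confirm that the unique interior critical point is a maximum rather than a minimum or an inflection, I would either compute $\mathfrak{D}''(\theta_2)$ at $\theta_2 = \alpha + \pi/2$ and verify that it is negative, or more economically observe that $\mathfrak{D}$ decreases toward the endpoints of the admissible $\theta_2$-interval (where either a curved segment vanishes or $\alpha + \beta$ approaches $0$ or $\pi$); combined with the uniqueness of the interior critical point, the boundary comparison identifies it as the global maximum on this interval.
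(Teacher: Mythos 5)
Your overall strategy---differentiating $\mathfrak{D}(\theta_2)=(2\pi+2\alpha+2\beta+\theta_2)\rho$ through the geometric relations tying $\alpha$ and $\beta$ to $\theta_2$---is the same as the paper's, which implicitly differentiates the two closure equations $2\rho\sin\alpha+\rho=2\rho\sin\beta+\rho\cos\theta_2+\bar{y}$ and $2\rho\cos\alpha+2\rho\cos\beta+\rho\sin\theta_2=\bar{x}$ and solves the resulting $2\times2$ linear system for $d\alpha/d\theta_2$ and $d\beta/d\theta_2$. However, your plan has a structural error at its center. Writing $\psi$ for the polar angle of $C_3-C_1$ and $\delta$ for the base angle of the isosceles triangle (so $\cos\delta=D/(4\rho)$; note that your $\gamma$ satisfying $\cos\gamma=D/(4\rho)$ is the \emph{base} angle, not the half-apex angle, which satisfies $\sin\gamma=D/(4\rho)$), the closure equations say $C_2-C_1=2\rho(\cos\alpha,\sin\alpha)$ and $C_3-C_2=2\rho(\cos\beta,-\sin\beta)$, whence $\alpha=\psi+\delta$ and $\beta=\delta-\psi$. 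Consequently $\alpha'+\beta'=2\delta'$: it is the \emph{orientation} contributions $\psi'$ that cancel, and the $D'$-driven contributions that add---exactly the opposite of what you assert. The stationarity condition is therefore not ``purely on the orientation of $C_1C_3$''; it is $1+4\delta'=0$, which after substituting the closure equations becomes $\cos(\theta_2-\alpha)+\cos(\theta_2+\beta)=-\sin(\alpha+\beta)$, i.e.\ the paper's equation. (Your expression for $DD'$ is also miscomputed; the correct one is $DD'=-\rho\left[\bar{x}\cos\theta_2+(\bar{y}-\rho)\sin\theta_2\right]$.)

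The second genuine gap is that the stationarity equation has \emph{two} solution families, $\theta_2=\alpha+\pi/2$ and $\theta_2+\beta=-\pi/2$, and your plan never surfaces the second one, yet your argument that the critical point is a maximum leans on ``the uniqueness of the interior critical point,'' which you have not established. The paper disposes of the second family by observing that $\theta_2+\beta=-\pi/2$ is precisely where the second left turn vanishes, so $\mathfrak{D}$ jumps there and the derivative does not exist (this is what the hypothesis that no curved segment vanishes is for); your remark about $0<\alpha+\beta<\pi$ excluding degeneracies does not cover this case. Once that branch is excluded, either of your two proposed ways of certifying a maximum is fine; the paper computes $\mathfrak{D}''=-\tfrac{3\rho}{2}\,\tfrac{1+\cos(\alpha+\beta)}{\sin(\alpha+\beta)}<0$ on $0<\alpha+\beta<\pi$.
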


\begin{proof}
Using Fig. \ref{fig:LRL_proof}, $\alpha$ and $\beta$ can be obtained in terms of $\theta_2$ as follows:

\begin{align}
& 2\rho \sin \alpha + \rho  = 2\rho \sin \beta + \rho \cos \theta_2 + \bar{y}, \\
& 2\rho \cos \alpha + 2 \rho \cos \beta + \rho \sin \theta_2 = \bar{x}.
\end{align}

Differentiating and simplifying the above equations, we get

\begin{align}
\cos \alpha \frac{d \alpha}{d \theta_2} -\cos \beta \frac{d \beta}{d \theta_2} &= -\frac{\sin \theta_2}{2}, \\
\sin \alpha \frac{d \alpha}{d \theta_2} +\sin \beta \frac{d \beta}{d \theta_2} &= \frac{\cos \theta_2}{2}.
\end{align}

Further solving for the derivatives, we get
\begin{align}
\frac{d \beta}{d \theta_2} &= \frac{\cos (\theta_2-\alpha)}{2 \sin (\alpha + \beta)}, \\
\frac{d \alpha}{d \theta_2} &= \frac{\cos (\theta_2+\beta)}{2 \sin (\alpha + \beta)}.
\end{align}

Therefore,
\begin{align}
\frac{{d\mathfrak{D}}}{d\theta_2} & = \rho (2\frac{d \beta}{d \theta_2} + 2\frac{d \alpha}{d \theta_2} + 1) \\
& = \rho(\frac{\cos (\theta_2-\alpha)}{ \sin (\alpha + \beta)} + \frac{\cos (\theta_2+\beta)}{ \sin (\alpha + \beta)} +1 ).
\end{align}
Equation $\frac{{d\mathfrak{D}}}{d\theta_2}=0$ yields the following possibilities: $\theta_2=\frac{\pi}{2} + \alpha$ or $\theta_2 + \beta = -\frac{\pi}{2}$. $\theta_2 + \beta = -\frac{\pi}{2}$ corresponds to the case where the second left turn disappears; there is a jump in the length of the $LRL$ path at this $\theta_2$, and therefore $\frac{{d\mathfrak{D}}}{d\theta_2}$ does not exist. $\theta_2=\frac{\pi}{2} + \alpha$ corresponds to the case where the turn angle in the right turn is equal to the turn angle in the second left turn; one can verify that ${\mathfrak{D}}(\theta_2)$ reaches a maximum at this point because $\frac{{d^2\mathfrak{D}}}{d\theta_2^2}=-\frac{3\rho}{2} \frac{1+\cos (\alpha + \beta)}{\sin (\alpha + \beta)}< 0$ (refer to Fig. \ref{fig:LRL}).

\end{proof}

\begin{figure}[htb]
\centering{}
\includegraphics[scale=.6]{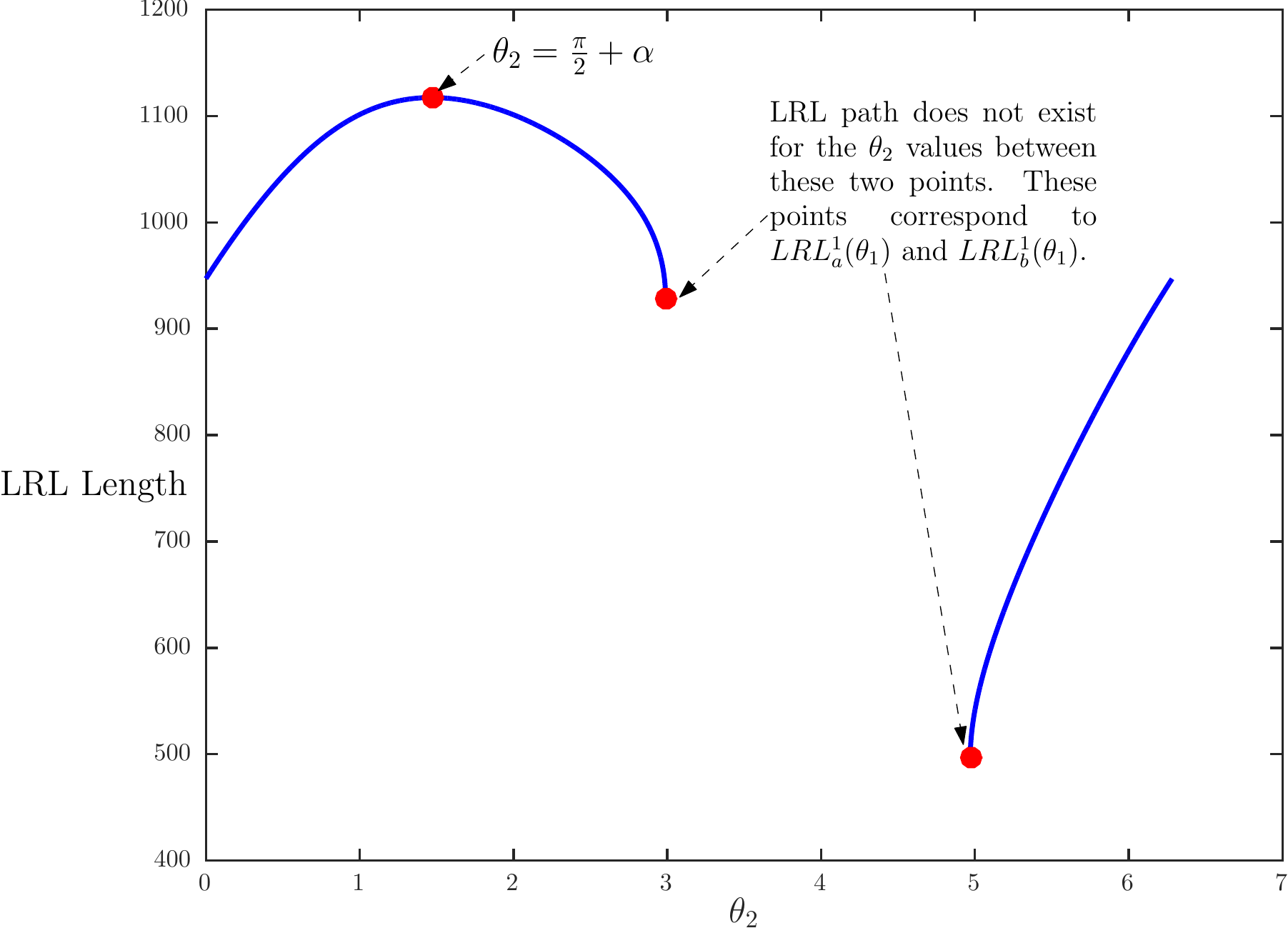}
\caption{Given $\theta_1$, the length of the $LRL$ path reaches a maximum when $\theta_2=\frac{\pi}{2} + \alpha$, as shown. This figure also shows the values of $\theta_2$ where the $LRL$ path just ceases to exist.}
\label{fig:LRL}
\end{figure}

\begin{figure}[htb]
\centering{}
\includegraphics[scale=1]{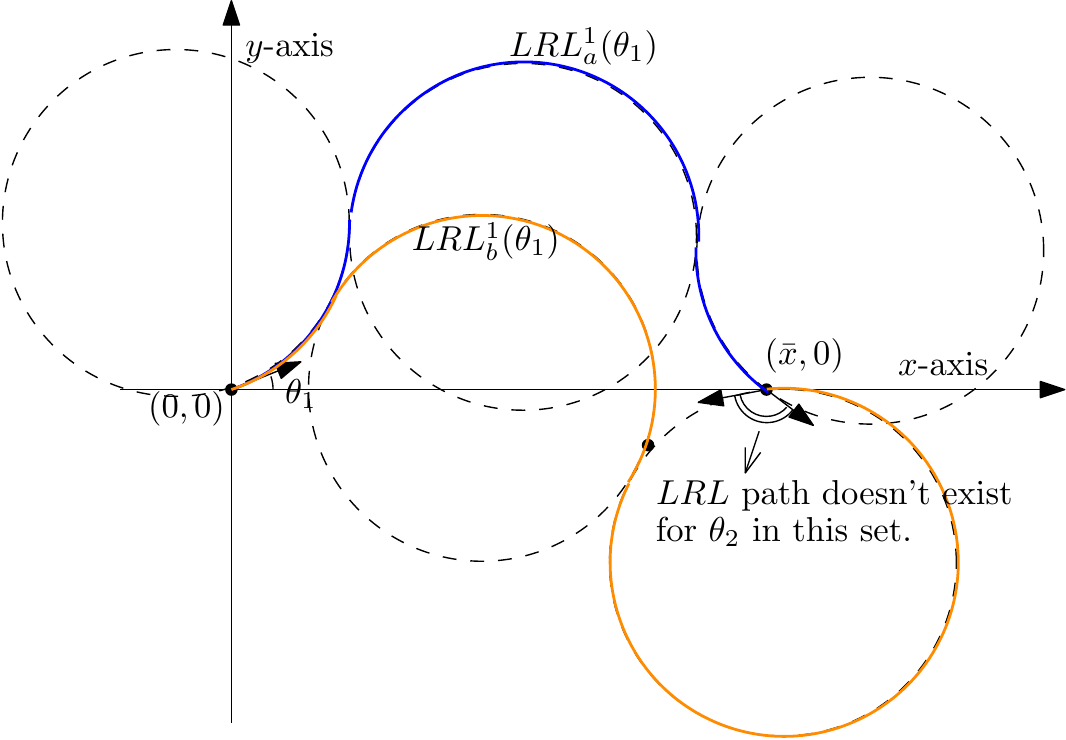}
\caption{Given $\theta_1$, the $LRL$ paths when the arc angle in the right turn is $\pi$. This figure shows the angles for $\theta_2$ when the $LRL$ path does not exist.}
\label{fig:LRL_exist}
\end{figure}

The derivatives of $LRL(\theta_1,\theta_2)$ do not exist when any turn in the path disappears or when the angle in the right turn becomes equal to $\pi$, as shown in Fig. \ref{fig:LRL_exist}. The length of the two paths (Fig. \ref{fig:LRL_exist}) when the $LRL$ path just ceases to exist are denoted by $LRL^1_a(\theta_1)$ and $LRL^1_b(\theta_1)$. Therefore, applying the above lemma to the $LRL$ path and following similar steps to those in subsection \ref{subsec:CSC}, we get the following result:

\begin{align}
\min_{\theta_{2}\in I_2}\{LRL(\theta_{1},\theta_{2})\} & := \min\{LRL(\theta_{1},\theta_{2}^{min}),LRL(\theta_{1},\theta_{2}^{max}), LR^1(\theta_{1}),RL^1(\theta_{1}),LRL^1_a(\theta_{1}),LRL^1_b(\theta_{1})\}.
\end{align}

Again, as in subsection \ref{subsec:CSC}, one can further simplify the above optimization problem:
\begin{align}
\min_{\theta_1 \in I_1} \min_{\theta_{2}\in I_2}\{LRL(\theta_{1},\theta_{2})\}  & = \min\{LRL^*, \min_{\theta_{1} \in I_1}\{LR^1(\theta_{1}),RL^1(\theta_{1}),LRL^1_a(\theta_{1}),LRL^1_b(\theta_{1})\}\} \label{main:LRL}
\end{align}

where \[LRL^*: = \min\{LRL(\theta_{1}^{min},\theta_{2}^{min}),LRL(\theta_{1}^{max},\theta_{2}^{min}),LRL(\theta_{1}^{min},\theta_{2}^{max}),LRL(\theta_{1}^{max},\theta_{2}^{max})\}.\]

Note that $LRL^1_a(\theta_1)$ and $LRL^1_b({\theta_1})$ can never result in an optimal Dubins path because the angle in the right turn is equal to $\pi$\cite{boissonat}. Therefore, once equation \eqref{main:LRL} is substituted in equation \eqref{main:problem}, the functions $LRL^1_a(\theta_1)$ and $LRL^1_b({\theta_1})$ will drop out.

$\min_{\theta_1 \in I_1} \min_{\theta_{2}\in I_2}\{RLR(\theta_{1},\theta_{2})\}$ can be simplified in a similar way. Hence, combining the above results with equation \eqref{main:CSC}, we obtain the result stated in Theorem \ref{theorem:main}.

\section{Algorithms for Optimizing Dubins Paths with At Most Two Segments}

\begin{figure}[htb]
\centering{}
\includegraphics[scale=1]{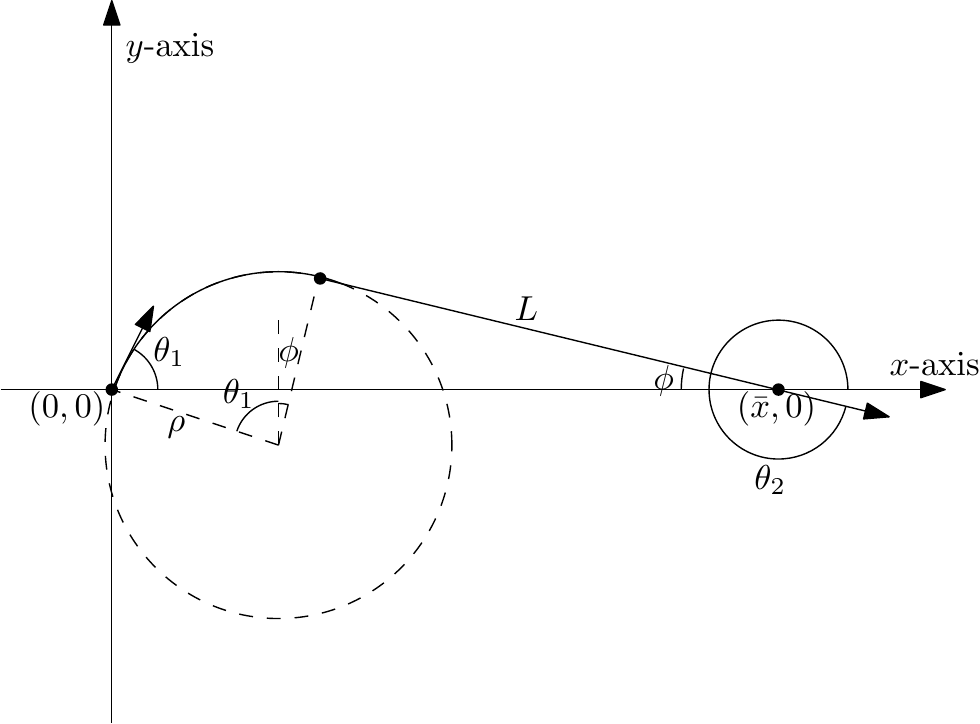}
\caption{$RS$ path}
\label{fig:RSpath}
\end{figure}

\begin{figure}[htb]
\centering{}
\includegraphics[scale=1]{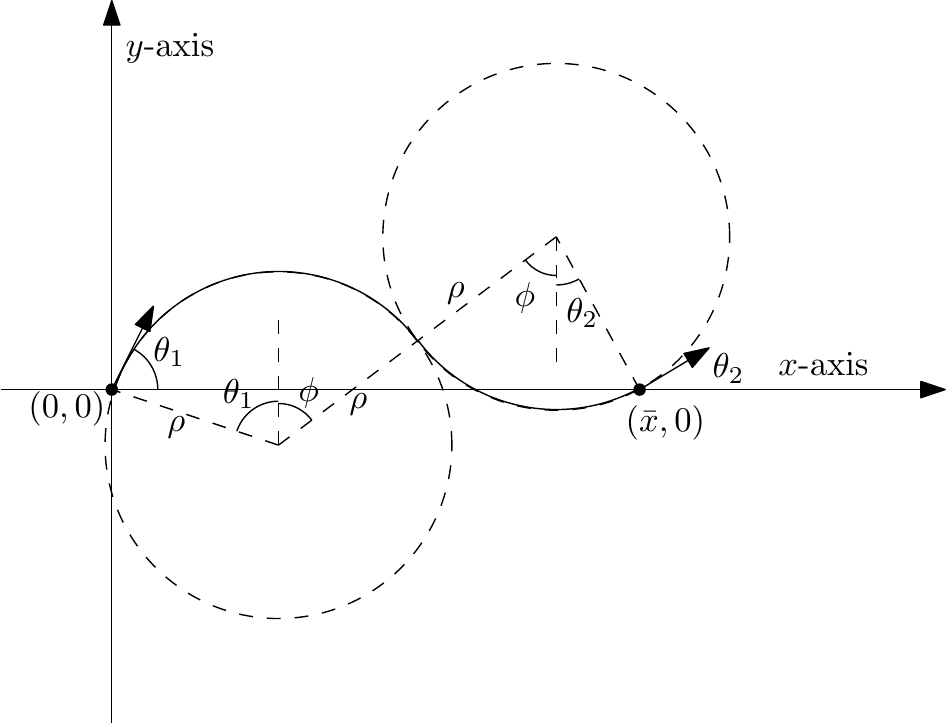}
\caption{$RL$ path}
\label{fig:RLpath}
\end{figure}

The only remaining step needed to solve the Dubins interval problem is to show how to optimize $\min_{\theta_{1} \in I_1}\{RS^1(\theta_{1}),SR^1(\theta_{1}),LS^1(\theta_{1}),SL^1(\theta_{1}),$ $LR^1(\theta_{1}),RL^1(\theta_{1})\}$. In this section, we will solve two problems: $\min_{\theta_{1} \in I_1}\{RS^1(\theta_{1})\}$ and $\min_{\theta_{1} \in I_1}\{RL^1(\theta_{1})\}$. The remaining paths can be solved using simple transformations (reflections about the $x$ or the $y$ axis).

\subsection{Optimizing the RS path}\label{sec:RS}

Without loss of generality, a reference frame can be chosen such that target 1 is at the origin and target 2 lies on the $x$-axis as shown in Fig. \ref{fig:RSpath}. Here $\bar{x}$ represents the Euclidean distance between the targets. Given $\theta_1$, the existence of the $RS$ path as well as its length can be determined using geometry. The length of the $S$ path, the angle between the $x$-axis and the $S$ path, and the final arrival angle at target 2 are also functions of $\theta_1$ and can be expressed as $L(\theta_1)$, $\phi(\theta_1)$, and $\theta_2(RS,\theta_1)$, respectively. Let the length of the $RS$ path be denoted as ${\mathfrak{D}}(\theta_1)$. For brevity, in some places we will use $L,\phi,\theta_2$, and ${\mathfrak{D}}$ instead of $L(\theta_1), \phi(\theta_1),\theta_{2}(RS,\theta_1)$, and ${\mathfrak{D}}(\theta_1)$, respectively. Let $d_S:= \bar{x}$ if the angle of the straight line joining the two targets lies in the intervals $I_1$ and $I_2$. If the angle constraints are not satisfied, $d_S$ is set to $\infty$. Similarly, let $d_R$ denote the length of the shortest circular arc of type $R$ that joins the two targets such that the boundary angles of the arc belong to the respective intervals at the targets. If such an arc does not exist, $d_R$ is set to $\infty$. In the following lemma, we assume that $[\theta_1^{min},\theta_1^{max}]\subseteq [0,2\pi]$ and $[\theta_2^{min},\theta_2^{max}]\subseteq [0,2\pi]$.

\begin{lemma}\label{lemma:RS}
$\min_{\theta_{1} \in I_1}\{RS^1(\theta_{1})\}:=\min \{ d_S, d_R, RS^1(\theta_1^{min}), RS^2(\theta_2^{min}), RS^2(\theta_2^{max})\}$.
\end{lemma}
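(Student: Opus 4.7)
The plan is to treat $\mathfrak{D}(\theta_1) := RS^1(\theta_1)$ by a monotonicity argument directly analogous to the one that drove the CSC analysis in Subsection \ref{subsec:CSC}. The $RS$ path is a degenerate CSC path (with one circular segment absent), so I expect the same derivative sign analysis that established Lemma \ref{lem:csc} to carry over, giving that $\frac{d\mathfrak{D}}{d\theta_1}$ has constant sign on every connected open interval of $\theta_1$ values for which the $RS$ path exists and neither of its two segments vanishes. Moreover, from geometry I expect the arrival angle $\theta_2(RS,\theta_1)$ to likewise be monotone in $\theta_1$ on the same intervals.

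First, I would use the geometry in Fig.\ \ref{fig:RSpath} to write closed-form expressions for the $R$-circle center, the tangent-line length $L(\theta_1)$, the line angle $\phi(\theta_1)$, the arrival angle $\theta_2(RS,\theta_1)$, and the total length $\mathfrak{D}(\theta_1)$. Second, I would differentiate these expressions and check that both $\frac{d\mathfrak{D}}{d\theta_1}$ and $\frac{d\theta_2}{d\theta_1}$ have constant sign wherever the $RS$ path exists non-degenerately. Third, I would identify the endpoints of the existence domain of the $RS$ path: when the $R$-turn shrinks to zero the path degenerates to the straight line between the targets, contributing the value $d_S$ (when angle compatibility with $I_1$ and $I_2$ holds), and when the $S$-segment shrinks to zero the path degenerates to a pure $R$-arc, contributing the value $d_R$.

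Next I would characterize the feasible set for $\theta_1$: it is the intersection of $I_1 = [\theta_1^{min},\theta_1^{max}]$ with the preimage under the monotone map $\theta_1\mapsto\theta_2(RS,\theta_1)$ of $I_2 = [\theta_2^{min},\theta_2^{max}]$. By the monotonicity of $\mathfrak{D}$, its infimum over any connected feasible subinterval is attained at an endpoint, and each endpoint is of one of the following four types: (a) $\theta_1 = \theta_1^{min}$ with $\theta_2(RS,\theta_1^{min}) \in I_2$, contributing $RS^1(\theta_1^{min})$; (b) a $\theta_1$ at which $\theta_2(RS,\theta_1)$ equals $\theta_2^{min}$ or $\theta_2^{max}$, contributing $RS^2(\theta_2^{min})$ or $RS^2(\theta_2^{max})$; (c) an existence-domain boundary where the path degenerates, contributing $d_S$ or $d_R$. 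Taking the minimum over all such candidates yields exactly the five-term minimum claimed.

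The main obstacle will be pinning down the \emph{direction} of monotonicity precisely enough to eliminate the endpoint $\theta_1 = \theta_1^{max}$ from the candidate set. I expect this to follow either from a global monotonicity of $\mathfrak{D}$ in $\theta_1$ (in the direction that makes the left endpoint of the feasible set optimal), or from the observation that if the right endpoint $\theta_1^{max}$ is feasible then the corresponding arrival angle must lie strictly inside $I_2$ so that monotonicity of $\mathfrak{D}$ forces the minimum to shift to a point already enumerated in (a)--(c). Verifying this carefully by working out the signs of the derivatives in the explicit formulas is where the bookkeeping becomes delicate, particularly since the $R$-turn angle can wrap modulo $2\pi$ and the existence domain of $RS$ may be disconnected; handling each connected component and showing that the boundary candidates it contributes are subsumed by the five listed terms is the heart of the proof.
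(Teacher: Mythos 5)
Your plan follows essentially the same route as the paper's proof: fix the frame with target 1 at the origin and target 2 at $(\bar{x},0)$, differentiate the geometric relations to get the sign of $d\mathfrak{D}/d\theta_1$, and enumerate the boundary candidates of the feasible set, with the degenerate endpoints of the existence domain contributing $d_S$ (vanishing turn) and $d_R$ (vanishing straight segment, which arises only when $\bar{x}\leq 2\rho$). One of your intermediate expectations is false, however: the arrival angle $\theta_2(RS,\theta_1)=2\pi-\phi(\theta_1)$ is \emph{not} monotone in $\theta_1$. The computation gives $\frac{d\phi}{d\theta_1}=\frac{\bar{x}}{L}\cos\phi-1$, which changes sign, so $\theta_2$ first decreases, reaches a minimum at the $\theta^*$ satisfying $\cos\phi(\theta^*)=L/\bar{x}$, and then increases back to $2\pi$; hence the preimage of $I_2$ can be a union of two intervals even where the $RS$ path exists, and your phrase ``preimage under the monotone map'' does not hold as stated. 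This does not damage the candidate list: the length itself satisfies $\frac{d\mathfrak{D}}{d\theta_1}=\bar{x}\sin\phi\geq 0$ with $\phi\in[0,\pi]$, so on each connected feasible component the minimum sits at the left endpoint, and every such endpoint is still either $\theta_1^{min}$, a point with $\theta_2\in\{\theta_2^{min},\theta_2^{max}\}$ (both preimages of each value are absorbed into $RS^2(\theta_2^{min})$ and $RS^2(\theta_2^{max})$, since $RS^2$ is by definition the shortest such path), or a degeneration point yielding $d_S$ or $d_R$. The global monotone increase of $\mathfrak{D}$ from $\bar{x}$ at $\theta_1=0$ is exactly what lets you discard $\theta_1^{max}$, resolving the obstacle you flagged. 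Finally, note that Lemma \ref{lem:csc} is stated only for $CSC$ paths with neither curved segment vanishing, so it cannot be cited directly for the two-segment $RS$ path; the explicit derivative computation you list as a fallback is in fact necessary and is precisely what the paper carries out.
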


\begin{proof}
Refer to the appendix for the proof.
\end{proof}

\subsection{Optimizing the RL path}\label{sec:RL}

We use similar notations as in previous subsections (refer to Fig. \ref{fig:RLpath}). The angles $\phi(\theta_1)$ and $\theta_2(RL,\theta_1)$ are also written as $\phi$ and $\theta_2$, for brevity. The length of the $RL$ path is denoted as ${\mathfrak{D}}(\theta_1)$ and is equal to $\rho(\theta_1+\theta_2+2\phi)$. $RL$ paths do not exist when $\bar{x}>4\rho$. In addition, even when $0\leq \bar{x} \leq 4\rho$, there are a subset of angles of $\theta_1$ for which an $RL$ path does not exist. Moreover, given $\theta_1$, there are two possible $RL$ paths, as either $\phi+\theta_2 \leq \pi$ or  $\phi+\theta_2 > \pi$. In the following discussion and in Fig. \ref{fig:RLpath}, we assume that $\phi+\theta_2<\pi$. The other $RL$ path can be addressed similarly.

We first define some values of $\theta_1$ where the optimum can occur (these correspond to the extreme values of ${\mathfrak{D}}$ and $\theta_2$ for the RL path and will be derived later in the proof). Let $\theta^{1*}$ be the solution to the equation $\theta_2(RL,\theta_1) = \theta_1$. Also, let $\theta^{2*}$ and $\theta^{3*}$ be the solutions to equation $\phi(\theta_1) + \theta_2(RL,\theta_1)= \pi$. Let $d_L$ denote the length of the shortest circular arc of type $L$ that joins the two targets such that the boundary angles of the arc belong to the corresponding intervals at the targets and $\phi + \theta_2 < \pi$. If such an arc does not exist, then $d_L$ is set to $\infty$. Let $RL^* = \min \{RL^1(\theta_1^{max}), RL^1(\theta_1^{min}),RL^2(\theta_2^{min}),RL^2(\theta_2^{max}\}$.

\begin{lemma}\label{lemma:RL}
If $\bar{x} > 2\rho$, $\min_{\theta_{1} \in I_1}\{RL^1(\theta_{1})\}:=\min \{RL^1(\theta^{1*}), RL^1(\theta^{2*}), RL^1(\theta^{3*}), RL^*\}$. If $0\leq \bar{x}\leq 2\rho$, $\min_{\theta_{1} \in I_1}\{RL^1(\theta_{1})\}:=\min \{d_L,d_R,RL^1(\theta^{1*}), RL^*)\}$.
\end{lemma}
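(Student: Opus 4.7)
The plan is to treat $\mathfrak{D}(\theta_1) = \rho(\theta_1 + \theta_2(\theta_1) + 2\phi(\theta_1))$ as a piecewise-smooth function of $\theta_1$ on the (possibly disconnected) subset of $I_1$ on which an $RL$ path with $\phi + \theta_2 < \pi$ exists and satisfies $\theta_2(RL,\theta_1) \in I_2$. Its minimum must be attained either at an interior stationary point, at an endpoint of $I_1$, at a value of $\theta_1$ where $\theta_2(\theta_1)$ hits an endpoint of $I_2$, or at a value of $\theta_1$ where the $RL$ configuration degenerates; my goal is to show that in each of the two regimes for $\bar{x}$ the list of candidate values collapses to the one in the statement.

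First I would set up coordinates with target 1 at the origin and target 2 at $(\bar{x},0)$, and impose external tangency of the right-turning circle at target 1 (center $C_R=(\rho\sin\theta_1,-\rho\cos\theta_1)$) with the left-turning circle at target 2 (center $C_L=(\bar{x}-\rho\sin\theta_2,\rho\cos\theta_2)$), so that $|C_R-C_L|=2\rho$. This yields one implicit equation relating $\theta_1$ and $\theta_2$, while $\phi$ is recovered as the oriented angle between the heading at target 1 and the vector $C_L-C_R$. Implicit differentiation then gives $d\theta_2/d\theta_1$ and $d\phi/d\theta_1$, and a short symmetry calculation (the two circles play interchangeable roles when $\theta_2=\theta_1$) should reduce the stationarity condition $d\mathfrak{D}/d\theta_1 = \rho(1 + d\theta_2/d\theta_1 + 2\,d\phi/d\theta_1)=0$ to the equation $\theta_2(RL,\theta_1)=\theta_1$, producing $\theta^{1*}$.

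The remaining candidates arise where $\mathfrak{D}$ or the feasible set fails to be smooth. A curved segment vanishes either when $\phi=0$ (the $RL$ path collapses to an $R$-arc, contributing $d_R$) or when $\theta_1=0$ (the $R$-arc disappears and the path collapses to an $L$-arc, contributing $d_L$). The branch boundary $\phi+\theta_2 = \pi$, where the second left turn becomes a full circle and $\mathfrak{D}$ jumps onto the other $RL$ branch, gives two further roots $\theta^{2*},\theta^{3*}$. Endpoints of $I_1$ contribute $RL^1(\theta_1^{\min}),RL^1(\theta_1^{\max})$, and the preimages under $\theta_2(RL,\cdot)$ of the endpoints of $I_2$ contribute $RL^2(\theta_2^{\min}),RL^2(\theta_2^{\max})$; together these make up $RL^*$.

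The case split is then driven by whether the two tangent circles can overlap. When $\bar{x}>2\rho$ the circles are forced to be disjoint, so neither $d_L$ nor $d_R$ is realizable as a limit of feasible $RL^1$ paths, and only the interior critical point $\theta^{1*}$, the two branch-transition roots $\theta^{2*},\theta^{3*}$, and the values captured by $RL^*$ survive. When $0\le\bar{x}\le 2\rho$, the circles can be made to coincide, so $d_L$ and $d_R$ become genuine limiting paths; at the same time I would argue that on this range the solutions of $\phi+\theta_2=\pi$ either lie on the other branch or coincide with the arc-only degeneracies, so $\theta^{2*},\theta^{3*}$ drop out and the list reduces to $\{d_L,d_R,RL^1(\theta^{1*}),RL^*\}$. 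The main obstacle I anticipate is exactly this last bookkeeping step: verifying that $\theta_2(\theta_1)$ is single-valued on the chosen branch $\phi+\theta_2<\pi$, and showing rigorously in the $\bar{x}\le 2\rho$ regime that the branch-transition roots $\theta^{2*},\theta^{3*}$ either fall outside the feasible region or collapse onto the single-arc paths $d_L,d_R$ already included.
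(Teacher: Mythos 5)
Your proposal follows essentially the same route as the paper's proof: implicit differentiation of the tangency relations to reduce $\frac{d{\mathfrak{D}}}{d\theta_1}=0$ to $\theta_1=\theta_2$ (yielding $\theta^{1*}$), the branch roots of $\phi+\theta_2=\pi$ giving $\theta^{2*},\theta^{3*}$, the single-arc degeneracies giving $d_R$ and $d_L$, the interval endpoints collected into $RL^*$, and the same case split on $\bar{x}$ versus $2\rho$ to decide which candidates survive. The one imprecision is that the vanishing-arc conditions are $\theta_1+\phi=0$ (path collapses to an $L$ arc, giving $d_L$) and $\theta_2+\phi=0$ (path collapses to an $R$ arc, giving $d_R$) rather than $\phi=0$ or $\theta_1=0$, but this does not alter the candidate list, and the final bookkeeping step you flag as a potential obstacle is treated no more rigorously in the paper itself.
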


\begin{proof}
Refer to the appendix.
\end{proof}

\section{Numerical results}

Computational results are presented for 25 instances with 20 targets in each instance. The locations of the targets were sampled from a $1000\times1000$ square. The minimum turning radius of the vehicle was chosen to be 100. The heading angles at each target are discretized into 4, 8, 16, and 32 intervals. We use the Noon-Bean transformation to first convert the one-in-a-set TSP into an ATSP. Then we use a transformation method outlined in \cite{TSPbook} to convert the ATSP into a symmetric TSP. This method converts an asymmetric instance with $n$ nodes into a symmetric instance with $3n$ nodes. We chose this method primarily because unlike other transformations, there is no big-$M$ constant involved, and therefore we did not have any numerical difficulties such as those faced in \cite{OberlineIEEEMagazine2010,manyam2012computation,manyam2013computation}. For example, the transformed TSP instance with 32 discretizations at each target has 1920 nodes. Each of the transformed TSP instances was solved to optimality using the CONCORDE solver\cite{Applegate:2007}. The improvement of the lower bounds as the number of discretizations or intervals increases is shown in Fig. \ref{fig:lb_prog}. On average, the  improvement of the lower bounds with respective to the optimal ETSP cost for 32 intervals was 22.28\%.

A feasible solution was also obtained by discretizing the angles at each target (32 values) and applying the above transformation procedure. The comparison of the cost of the feasible solution with respect to the optimal Euclidean TSP cost and the lower bound (corresponding to 32 intervals at each target) for the 25 instances is shown in Fig. \ref{fig:best_ublb}. The average deviation of the cost of the feasible solution from its corresponding lower bound is 5.2\%, while the average deviation of the cost of the feasible solution from its corresponding ETSP cost is 29.2\%. In one of the instances, we found the cost of the feasible solution from its corresponding lower bound improved by approximately 44\%. These results show that the proposed approach can be used to obtain tight lower bounds for the DTSP. A feasible DTSP solution and an optimal solution corresponding to the lower bound for an instance are shown in Fig. \ref{fig:paths}.

\begin{figure}[!h]
\centering{}
\includegraphics[width=4in]{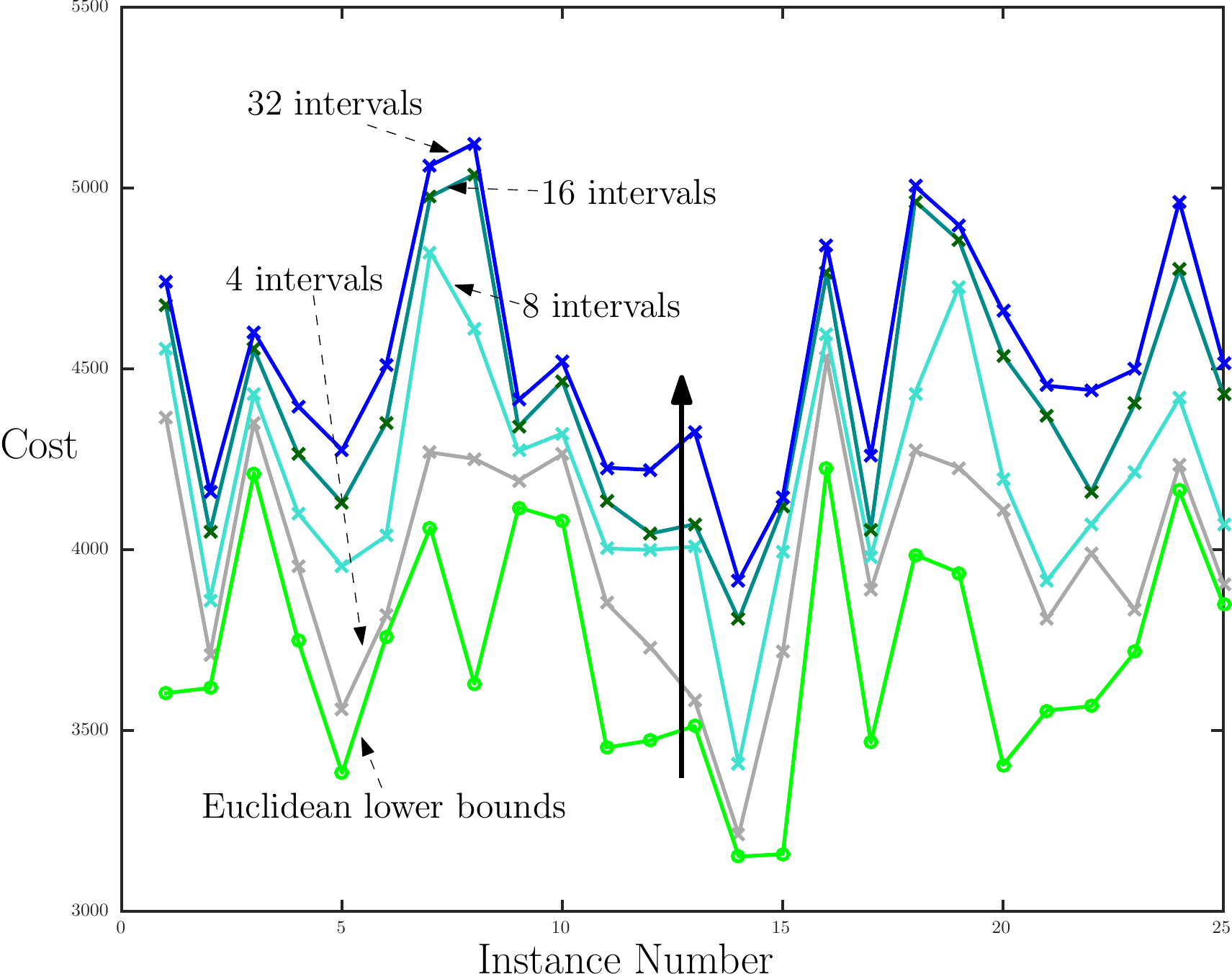}
\caption{Lower bounds computed with 4, 8, 16, and 32 intervals at each target for 25 instances.}
\label{fig:lb_prog}
\vspace{.75in}
\end{figure}

\begin{figure}[!h]
\centering{}
\includegraphics[width=4in]{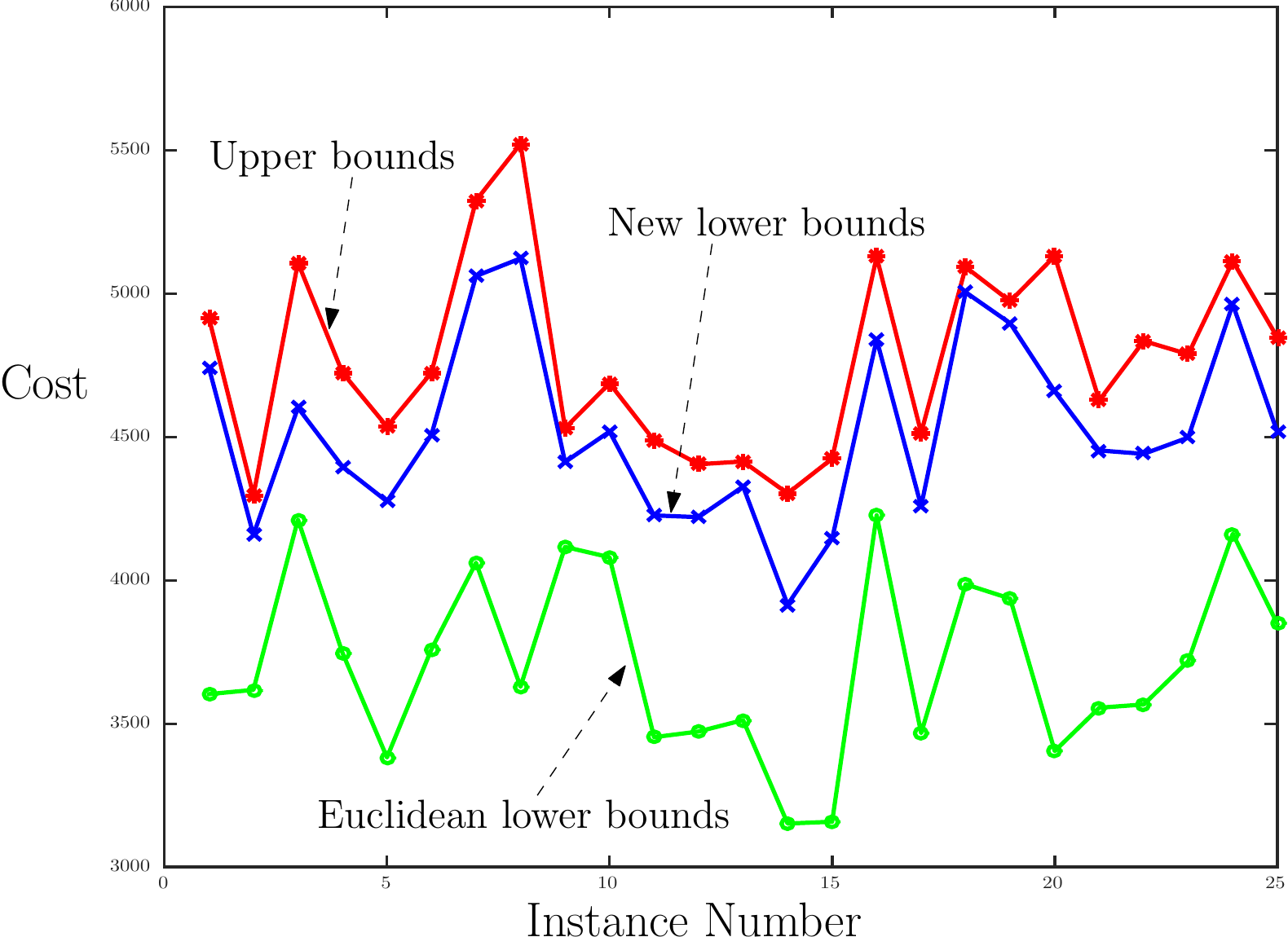}
\caption{Comparison between lower bounds and upper bounds for 32 discretizations, along with the optimal Euclidean TSP cost.}
\label{fig:best_ublb}
\vspace{.5in}
\end{figure}

\begin{figure}[!h]
\centering{}
\includegraphics[width=4in]{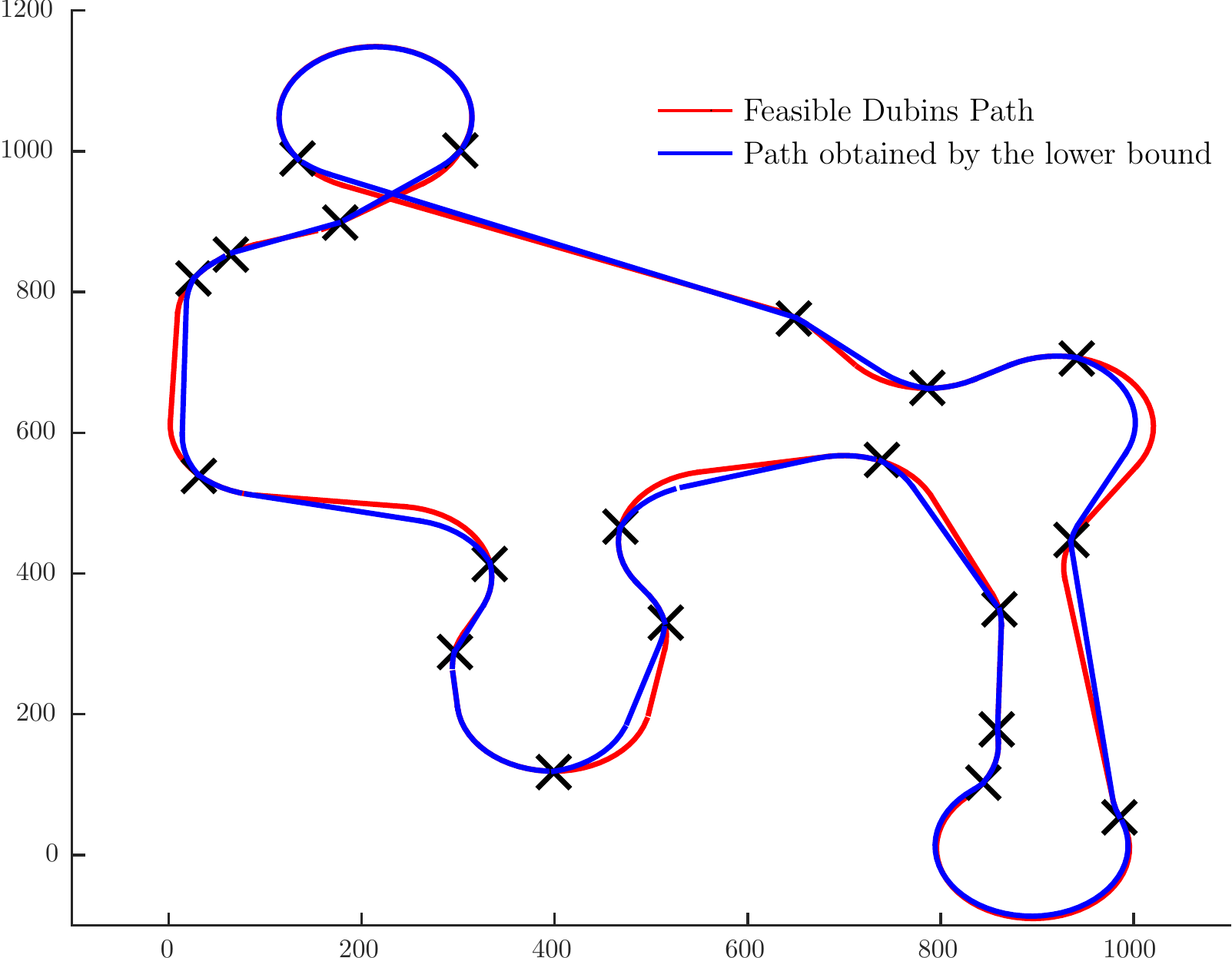}
\caption{A feasible Dubins path for an instance with 20 targets, and the path obtained from lower bound computation.}
\label{fig:paths}
\end{figure}

\section{Conclusion}

We provide a systematic procedure to find lower bounds for the DTSP. This article provides a new direction for developing approximation algorithms for the DTSP. Currently, the transformation method increases the size of the one-in-a-set TSP by 2 or 3 times, resulting in a large TSP. Computationally, more efficient tools for directly solving the one-in-a-set TSP will be useful in finding tighter lower and upper bounds for the DTSP. Future work can also address the same problem with multiple vehicles and other precedence constraints.

\bibliographystyle{IEEEtran}
\bibliography{NSFbib}

\section{Appendix}

\begin{figure}
        \centering
        \subfigure[$\bar{x}>2\rho$]{\includegraphics[scale=.54]{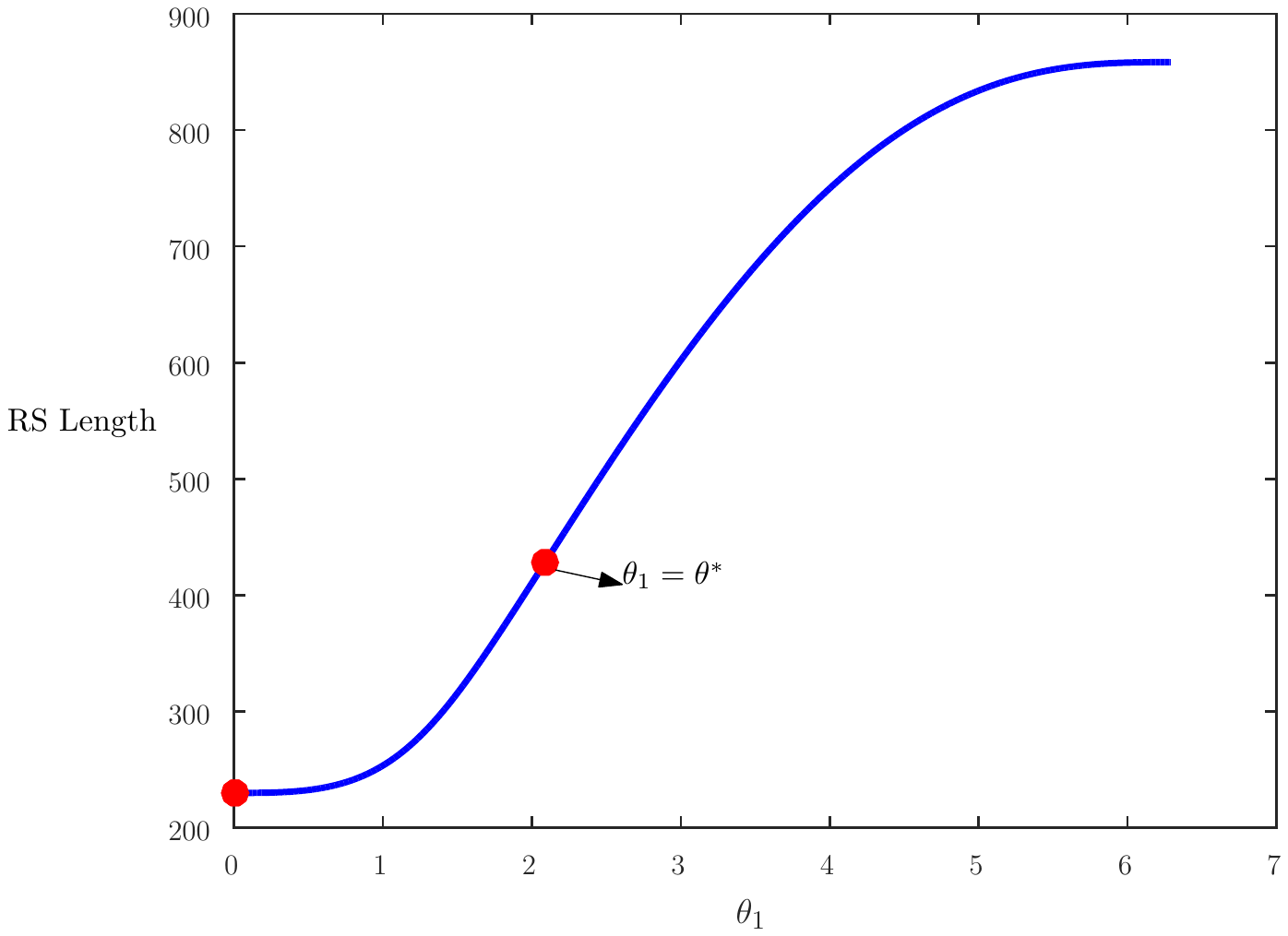}}
        \subfigure[$\bar{x}>2\rho$]{\includegraphics[scale=.54]{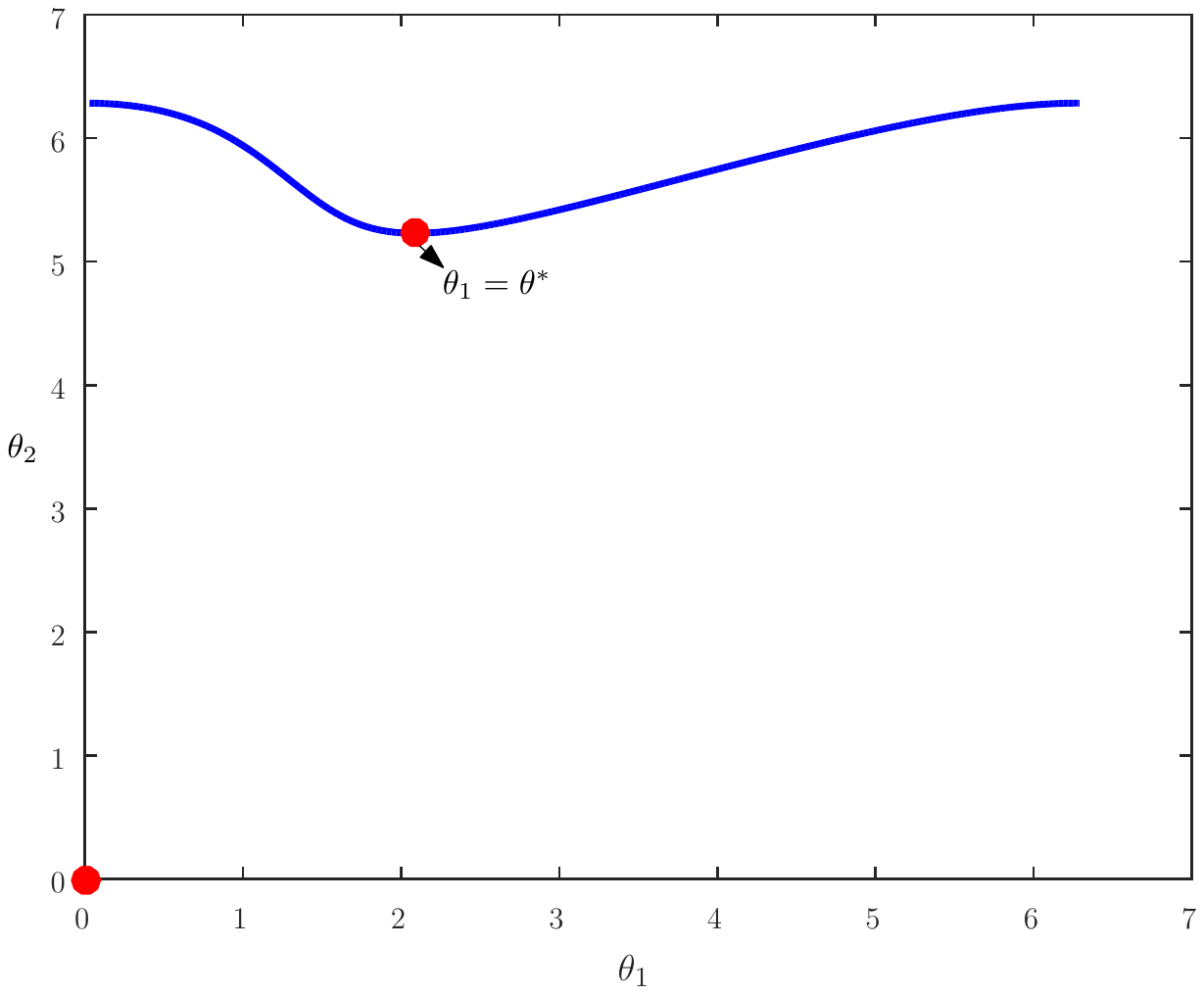}} \\
        \subfigure[$\bar{x}\leq 2\rho$]{\includegraphics[scale=.54]{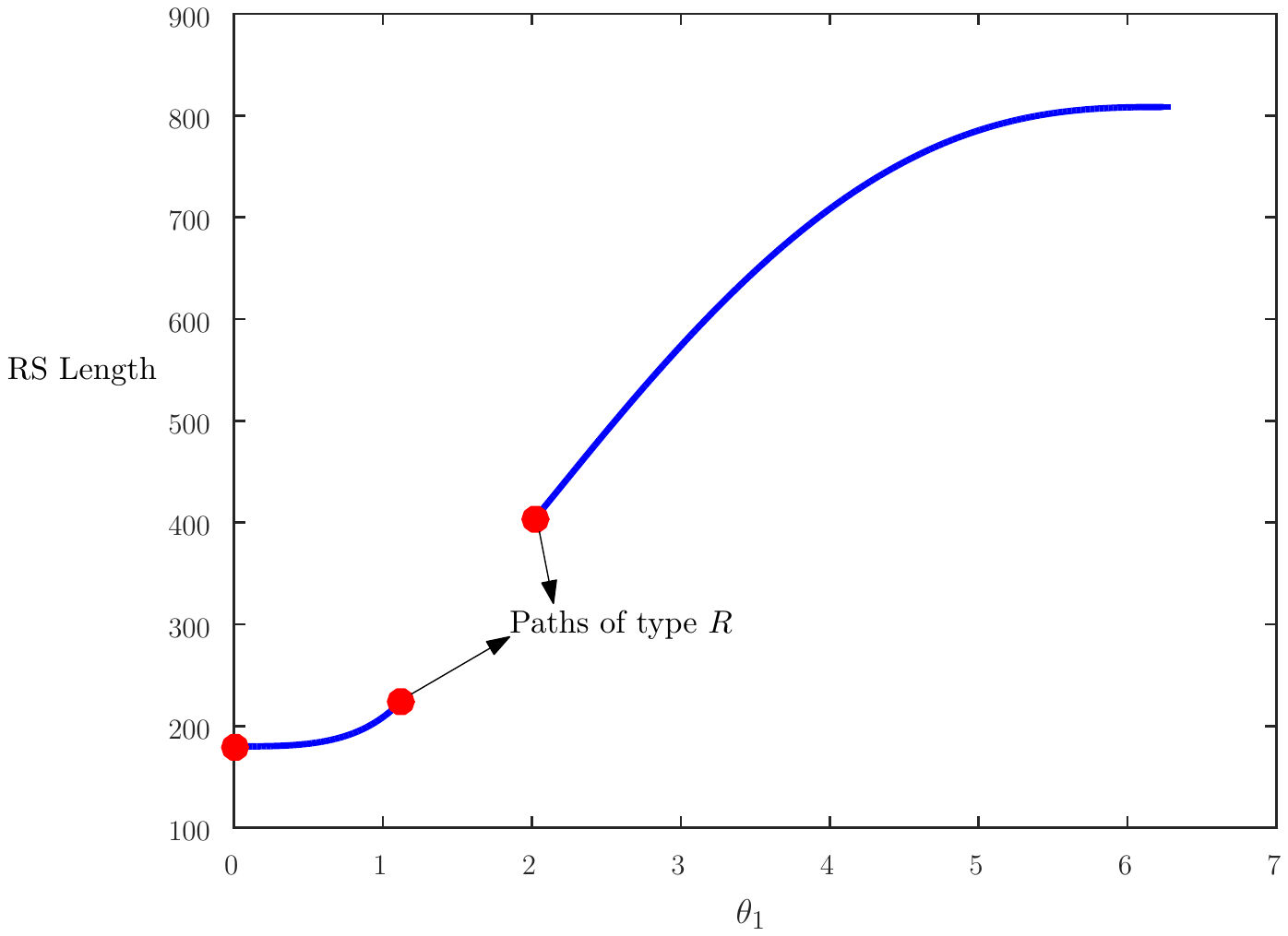}}
        \subfigure[$\bar{x}\leq 2\rho$]{\includegraphics[scale=.54]{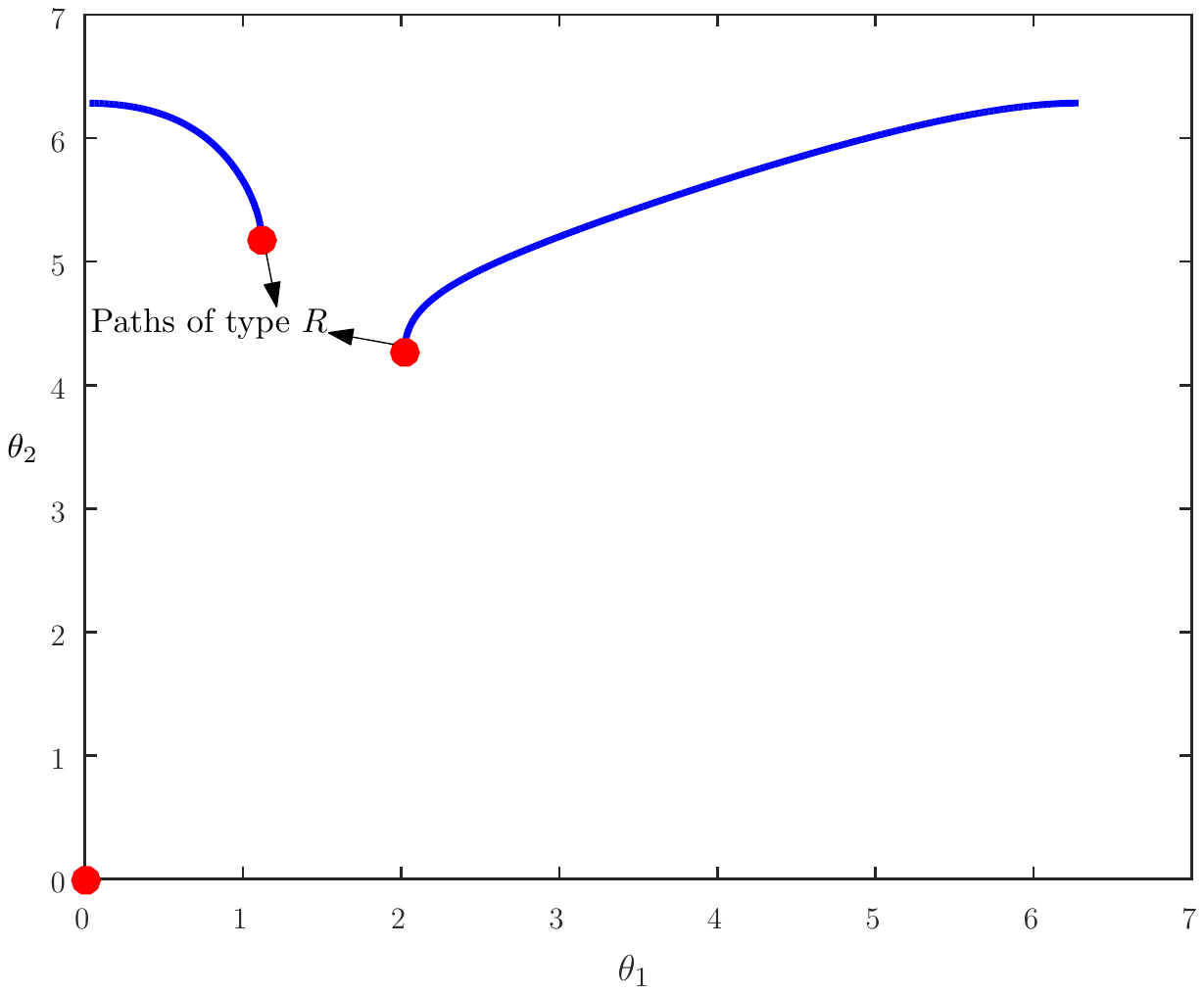}}
        \caption{$RS$ path: examples illustrating ${\mathfrak{D}}(\theta_1)$ and ${\theta_2}(RS,\theta_1)$.}
        \label{fig:distRS_d2r}
\end{figure}

\subsection{Proof of Lemma \ref{lemma:RS}}

 Using Fig. \ref{fig:RSpath}, one can relate $L$ and $\phi$ to $\theta_1$ using the following equations:
\begin{align}
\rho\sin\phi + L \cos\phi & = \bar{x} - \rho \sin\theta_1, \nonumber \\
\rho\cos\phi -L\sin\phi & = \rho \cos\theta_1. \label{RSeq}
\end{align}
 The arrival angle $\theta_2(RS,\theta_1)$ at target 2 is equal to $2\pi-\phi$. We now consider two different cases: $\bar{x}>2\rho$ and $\bar{x}\leq 2\rho$ (the $RS$ path does not exist for a subset of angles of $\theta_1$ if $\bar{x}<2\rho$). \\

\textbf{Case 1:} $\bar{x}> 2\rho$. \\

The length of the $RS$ path is ${\mathfrak{D}}:= (\theta_1+\phi)\rho + L$. Therefore, $\frac{d{\mathfrak{D}}}{d\theta_1}:= (1+\frac{d\phi}{d\theta_1})\rho + \frac{dL}{d\theta_1}$. The derivatives of $\phi$ and $L$ with respect to $\theta_1$ can be obtained by differentiating \eqref{RSeq} as follows:

\begin{align}
(\rho\cos\phi-L\sin\phi) \frac{d\phi}{d\theta_1} + \cos\phi\frac{dL}{d\theta_1} & = -\rho\cos\theta_1,\\
-(\rho\sin\phi + L \cos\phi) \frac{d\phi}{d\theta_1} -\sin \phi \frac{dL}{d\theta_1} & = -\rho\sin\theta_1.
\end{align}

Solving these equations and simplifying further, we obtain the following:
\begin{align}
 \frac{d\phi}{d\theta_1} & = \frac{\bar{x}}{L}\cos\phi -1, \\
 \frac{dL}{d\theta_1} & = -\frac{\rho\bar{x}}{L} \cos\theta_1.
\end{align}

Therefore,
\begin{align}
\frac{d{\mathfrak{D}}}{d\theta_1} &= (1+\frac{d\phi}{d\theta_1})\rho + \frac{dL}{d\theta_1}\\
                                  &= \frac{\bar{x}}{L}(\rho\cos\phi-\rho\cos\theta_1) \\
                                  &= \bar{x}\sin\phi.
\end{align}
For any $\theta_1\in [0,2\pi]$, it is easy to verify geometrically that $\phi\in [0,\pi]$ using Fig. \ref{fig:RSpath}. Therefore, $\forall \theta_1\in (0,2\pi)$, $\frac{d{\mathfrak{D}}}{d\theta_1} > 0$, i.e., the length of the $RS$ path increases monotonically from $\bar{x}$. When $\theta_1=2 \pi$, the curved segment in the $RS$ path vanishes and the length of the $RS$ path returns to the Euclidean distance between the targets ($\bar{x}$). Even though the length of the $RS$ path increases monotonically for any $\theta_1\in [0,2\pi)$, the arrival angle at target 2, $\theta_2:=2\pi-\phi$, first decreases with $\theta_1$, reaches a minimum at some $\theta_1=\theta^*$, and increases to $2\pi$. This minimum can be computed by solving $\frac{d\phi}{d\theta_1} =0 \Rightarrow \frac{\bar{x}}{L}\cos (\phi(\theta^*)) -1 = 0$ or $\cos(\phi(\theta^*)) = \frac{L}{\bar x}$. One can verify that at $\theta_1=\theta^*$, $\theta_2$ reaches a minimum.

Now, the optimum for $\min_{\theta_{1} \in I_1}\{RS^1(\theta_{1})\}$ must satisfy one of the following conditions:
\begin{enumerate}
\item $\frac{d{\mathfrak{D}}}{d\theta_1} = 0$ or $\theta_1=0$ ($\frac{d{\mathfrak{D}}}{d\theta_1}$ does not exist at this point) or $\theta_1=\theta_1^{min}$ or $\theta_1=\theta_1^{max}$. $\forall \theta_1\in (0,2\pi)$, $\frac{d{\mathfrak{D}}}{d\theta_1} \neq 0$. As the length of the $RS$ path increases monotonically with respect to $\theta_1$, we need not consider $\theta_1=\theta_1^{max}$. Therefore, for this condition, the optimum occurs when $\theta_1=0$ or $\theta_1=\theta_1^{min}$.

\item $\theta_2=\theta_2^{min}$ or $\theta_2=\theta_2^{max}$. %$\frac{d{\theta_2}}{d\theta_1} = 0$ occurs when $\theta_1=\theta^*$. However, as the length of the $RS$ path increases monotonically, $\theta_1=\theta^*$ cannot be optimal unless $\theta^*=\theta_1^{min}$ (Fig. \ref{fig:distRS_d2r}). Therefore, for this condition, the optimum occurs when $\theta_1=\theta_1^{min}$ or $\theta_2=\theta_2^{min}$ or $\theta_2=\theta_2^{max}$.
\end{enumerate}

Therefore, when $\bar{x} > 2\rho$, $\min_{\theta_{1} \in I_1}\{RS^1(\theta_{1})\}:=\min \{ d_S, RS^1(\theta_1^{min}), RS^2(\theta_2^{min}), RS^2(\theta_2^{max})\}$.\\

\textbf{Case 2:} $\bar{x} \leq 2\rho$. \\

In this case, the $RS$ path is not defined for any $\theta_1\in (\sin(\frac{\bar{x}}{2\rho }),\frac{\pi}{2} + \cos(\frac{\bar{x}}{2\rho}))$. Moreover, when $\theta_1 = \sin(\frac{\bar{x}}{2\rho })$ or $\theta_1 = \frac{\pi}{2} + \cos(\frac{\bar{x}}{2\rho})$, the $RS$ path reduces to just one segment of type $R$. Therefore, following the same analysis as in the previous case, $\min_{\theta_{1} \in I_1}\{RS^1(\theta_{1})\}:=\min \{ d_S, d_R, RS^1(\theta_1^{min}), RS^2(\theta_2^{min}), RS^2(\theta_2^{max})\}$. Hence this case is proved.

\subsection{Proof of Lemma \ref{lemma:RL}}

We can solve for $\phi$ and $\theta_2$ using the following equations (Fig. \ref{fig:RLpath}):

\begin{align}
2\rho \cos\phi - \rho \cos \theta_2 & = \rho \cos\theta_1, \nonumber \\
2\rho \sin\phi + \rho \sin\theta_2 & = \bar{x} - \rho \sin\theta_1.\label{RLeq}
\end{align}

Differentiating and simplifying these equations, we get

\begin{align}
-2 \sin \phi \frac{d\phi}{d\theta_1} + \sin \theta_2 \frac{d\theta_2}{d\theta_1} & = -\sin \theta_1, \\
2 \cos \phi \frac{d\phi}{d\theta_1} + \cos \theta_2 \frac{d\theta_2}{d\theta_1} & = -\cos \theta_1.
\end{align}

Solving further for $\frac{d\phi}{d\theta_1}$ and $\frac{d\theta_2}{d\theta_1}$, we get
 \begin{align}
 \frac{d\phi}{d\theta_1} & = \frac{\sin (\theta_1-\theta_2)}{2\sin (\phi+\theta_2)}, \\
 \frac{d\theta_2}{d\theta_1} & = -\frac{\sin (\theta_1+\phi)}{\sin (\phi+\theta_2)},  \\
\frac{d{\mathfrak{D}}}{d\theta_1} &= \rho(1+\frac{d\theta_2}{d\theta_1}+2\frac{d\phi}{d\theta_1}) \nonumber \\
                                  &= \rho(1- \frac{\sin (\theta_1+\phi)}{\sin (\phi+\theta_2)} + \frac{\sin (\theta_1-\theta_2)}{\sin (\phi+\theta_2)} ).
\end{align}

Equating $\frac{d{\mathfrak{D}}}{d\theta_1}=0$ and simplifying the equations, we get either $\phi + \theta_1 = 0$ or $\phi + \theta_2 = 0$ or $\theta_1=\theta_2$. $\phi + \theta_1 = 0$ or $\phi + \theta_2 = 0$ would imply that one of the circles vanishes; however, this is possible only when $\bar{x}\leq 2\rho$. When $\theta_1=\theta_2$, we note that $\frac{d\theta_2}{d\theta_1} = -1$ and $\frac{d\phi}{d\theta_1} = 0$. Using this, one can verify that  $\frac{d^2{\mathfrak{D}}}{d\theta_1^2} = \frac{2(1-\cos(\theta_1+\phi))}{\sin(\theta_1+\phi)}$ $\Rightarrow$ $\frac{d^2{\mathfrak{D}}}{d\theta_1^2}> 0$. Therefore, the length of the $RL$ path reaches a minimum when $\theta_1=\theta_2$. \\

\textbf{Case 1:} $4\rho \geq \bar{x} \geq 2\rho$. \\

The optimum for $\min_{\theta_{1} \in I_1}\{RL^1(\theta_{1})\}$ must occur at one of the extreme values of ${\mathfrak{D}}(\theta_1)$ or when $\theta_1\in\{\theta_1^{min},\theta_1^{max}\}$ or $\theta_2\in\{\theta_2^{min},\theta_2^{max}\}$. ${\mathfrak{D}}(\theta_1)$ reaches a local minimum at $\theta_1=\theta^{1*}$ (Fig. \ref{fig:distRL}). Also, the $RL$ path just ceases to exist when $\theta_1 =\theta^{2*}$ or $\theta_1 =\theta^{3*}$. Specifically, for a small $\epsilon > 0$, the $RL$ path does not exist when $\theta_1 =\theta^{2*}-\epsilon$ or $\theta_1 =\theta^{3*}+\epsilon$. Therefore, $\min_{\theta_{1} \in I_1}\{RL^1(\theta_{1})\}:=\min \{RL^1(\theta^{1*}), RL^1(\theta^{2*}), RL^1(\theta^{3*}), RL^*\}$. \\

\textbf{Case 2:} $2\rho \geq \bar{x} \geq 0$. \\

In this case, one of the circles may cease to exist, and therefore the optimum may be equal to $d_L$ or $d_R$ if the corresponding angle constraints are met. Following the same arguments as in the previous case, we obtain $\min_{\theta_{1} \in I_1}\{RL^1(\theta_{1})\}:=\min \{d_L,d_R,RL^1(\theta^{1*}), RL^*)\}$. Hence this case is proved.

\begin{figure}
        \centering
        \subfigure[$\bar{x}>2\rho$]{\includegraphics[scale=.46]{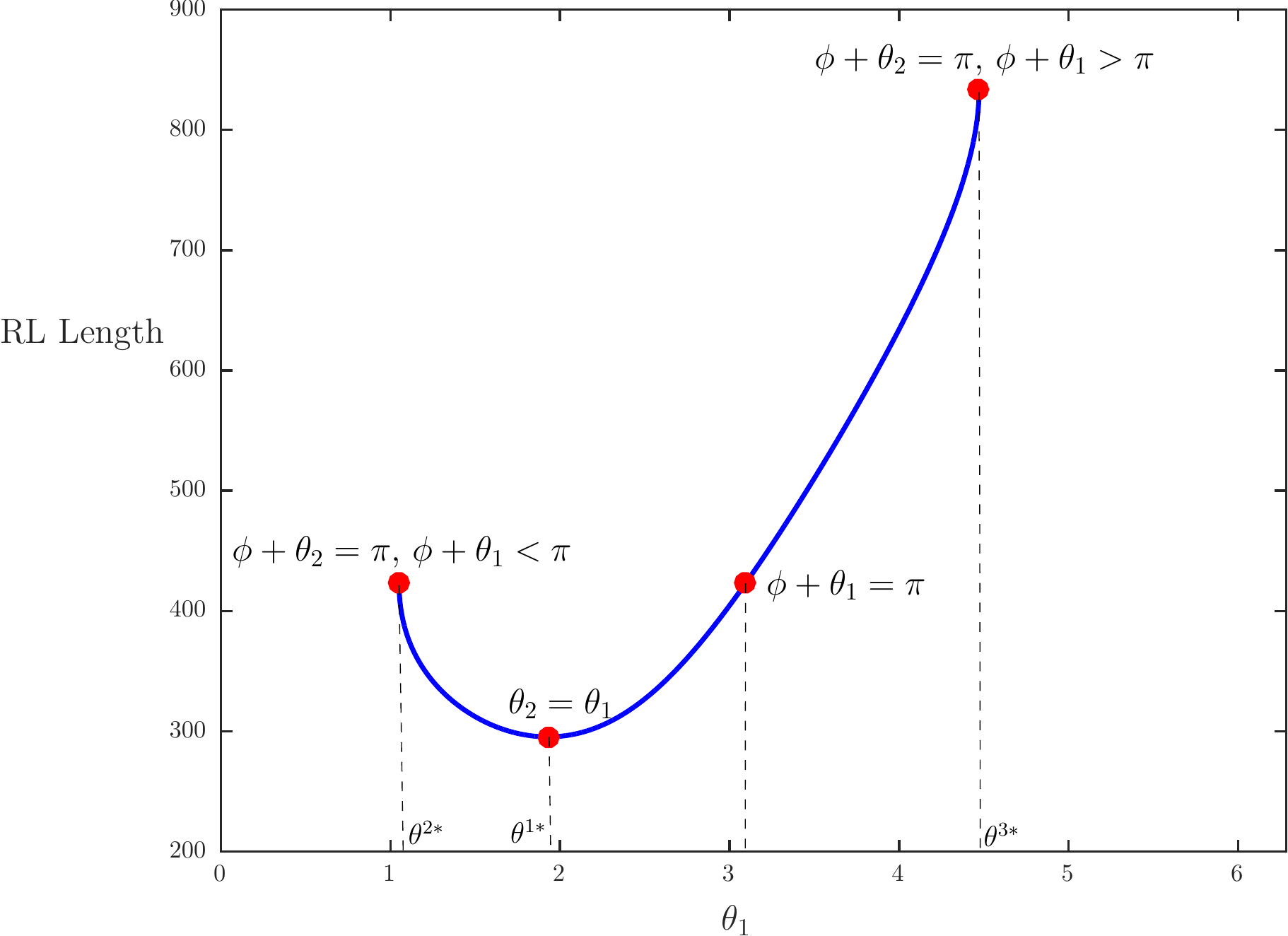}}
        \subfigure[$\bar{x}>2\rho$]{\includegraphics[scale=.46]{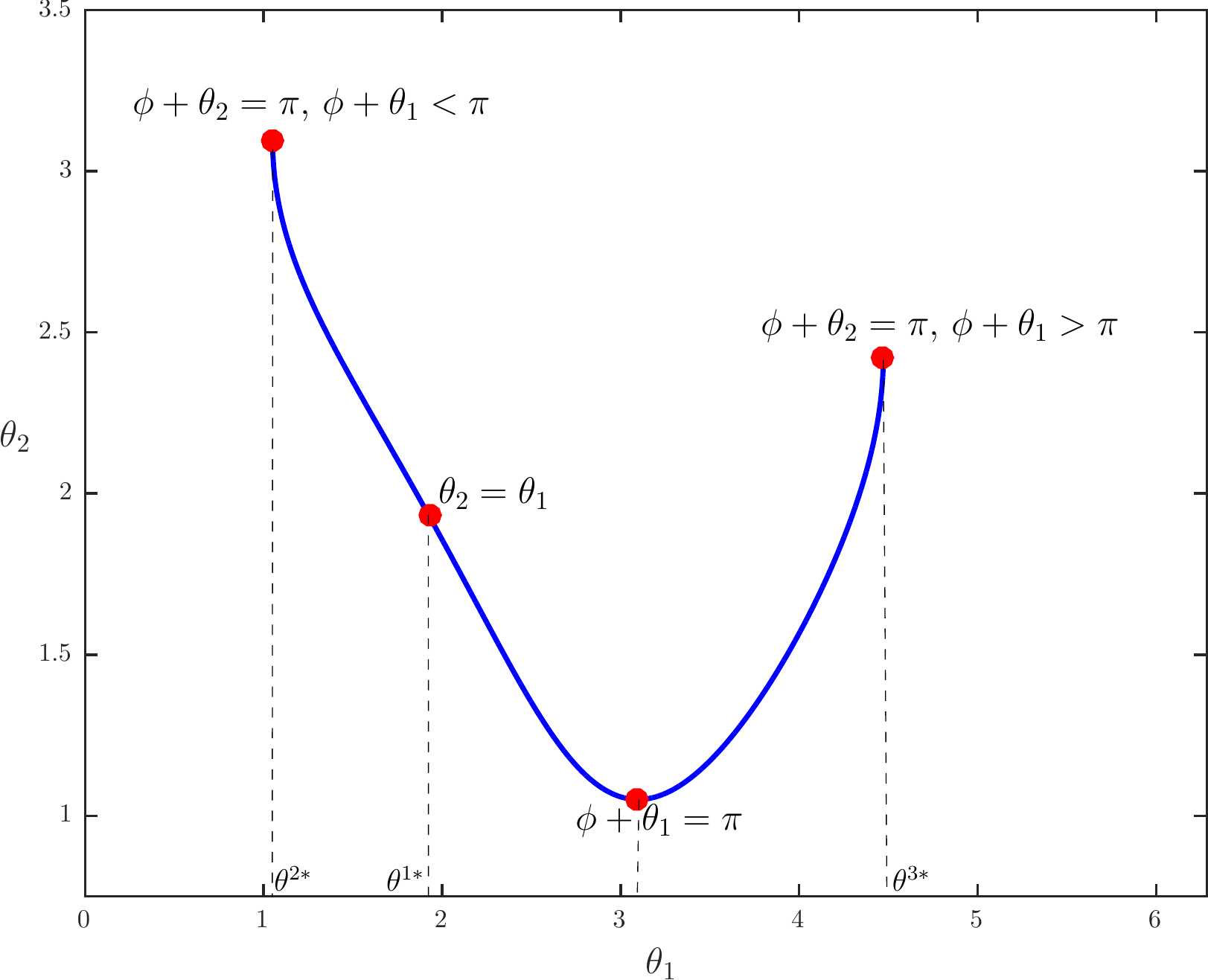}} \\
        \subfigure[$\bar{x}\leq 2\rho$]{\includegraphics[scale=.46]{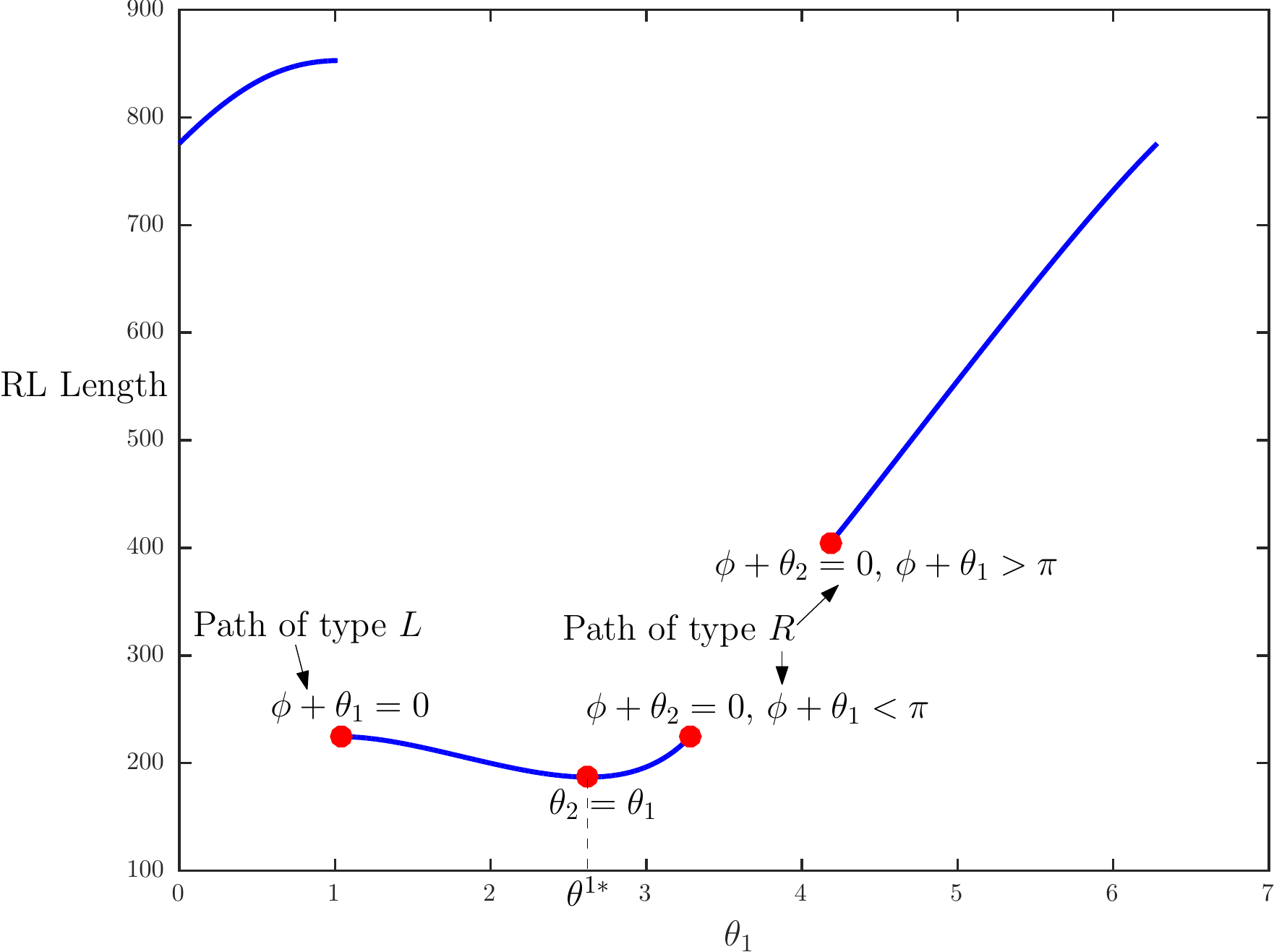}}
        \subfigure[$\bar{x}\leq 2\rho$]{\includegraphics[scale=.46]{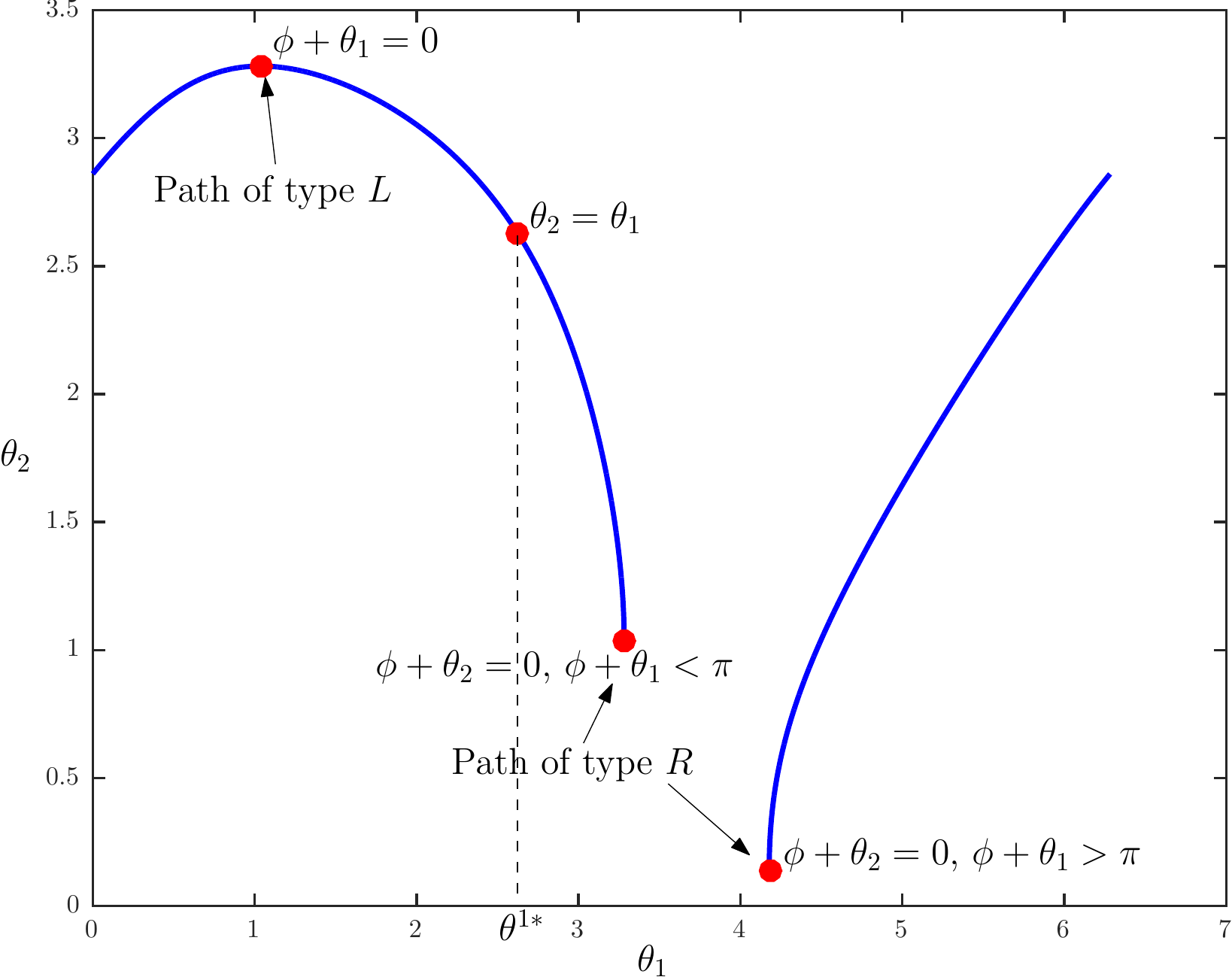}}
        \caption{$RL$ Path: examples illustrating ${\mathfrak{D}}(\theta_1)$ and ${\theta_2}(RL,\theta_1)$ for the case when $0\leq \phi+\theta_2 \leq \pi$.}
        \label{fig:distRL}
\end{figure}

\end{document}